\long\def\comment#1{}
\newtheorem{corollary}{Corollary}
\newtheorem{assumption}{Assumption}
\newtheorem{remark}{Remark}
\newtheorem{lemma}{Lemma}
\newtheorem{theorem}{Theorem}
\newtheorem{example}{Example}
\begin{document}

\setlength{\arraycolsep}{0.3em}

\title{DOT and DOP: Linearly Convergent Algorithms for Finding Fixed Points of Multi-Agent Operators
\thanks{}}

\author{Xiuxian Li, {\em Member, IEEE}, Min Meng, and Lihua Xie, {\em Fellow IEEE}
\thanks{This research was supported by Ministry of Education, Singapore, under grant AcRF TIER 1- 2019-T1-001-088 (RG72/19), the National Natural Science Foundation of China under Grant 62003243, Shanghai Municipal Commission of Science and Technology No. 19511132101, Shanghai Municipal Science and Technology Major Project under no. 2021SHZDZX0100. {\em(Corresponding author: Lihua Xie.)}}
\thanks{X. Li and M. Meng are with Department of Control Science and Engineering, College of Electronics and Information Engineering, Institute for Advanced Study, and Shanghai Research Institute for Intelligent Autonomous Systems, Tongji University, Shanghai, China (e-mail: xli@tongji.edu.cn, mengmin@tongji.edu.cn).}
\thanks{L. Xie is with School of Electrical and Electronic Engineering, Nanyang Technological University, 50 Nanyang Avenue, Singapore (e-mail: elhxie@ntu.edu.sg).}
}

\maketitle

\setcounter{equation}{0}
\setcounter{figure}{0}
\setcounter{table}{0}

\begin{abstract}
This paper investigates the distributed fixed point finding problem for a global operator over a directed and unbalanced multi-agent network, where the global operator is quasi-nonexpansive and only partially accessible to each individual agent. Two cases are addressed, that is, the global operator is sum separable and block separable. For this first case, the global operator is the sum of local operators, which are assumed to be Lipschitz, and each local operator is privately known to each individual agent. To deal with this scenario, a distributed (or decentralized) algorithm, called Distributed quasi-averaged Operator Tracking algorithm (DOT), is proposed and rigorously analyzed, and it is shown that the algorithm can converge to a fixed point of the global operator at a linear rate under a linear regularity condition, which is strictly weaker than the strong convexity assumption on cost functions in existing convex optimization literature. For the second scenario, the global operator is composed of a group of local block operators which are Lipschitz and can be accessed only by each individual agent. In this setup, a distributed algorithm, called Distributed quasi-averaged Operator Playing algorithm (DOP), is developed and shown to be linearly convergent to a fixed point of the global operator under the linear regularity condition. The above studied problems provide a unified framework for many interesting problems. As examples, the proposed DOT and DOP are exploited to deal with distributed optimization and multi-player games under partial-decision information. Finally, numerical examples are presented to corroborate the theoretical results.
\end{abstract}

\begin{IEEEkeywords}
Distributed algorithms, multi-agent networks, linear convergence, fixed point, bounded linear regularity, distributed optimization, game, real Hilbert spaces.
\end{IEEEkeywords}

\section{Introduction}\label{s1}

Fixed point theory in real Hilbert spaces is known as a powerful tool in a variety of domains such as optimization, engineering, economics, game theory, and nonlinear numerical analysis \cite{bauschke2017convex,cegielski2012iterative}. Generally speaking, the main goal is to devise algorithms for computing a fixed point of an operator.

Up to now, plenty of research has extensively addressed centralized algorithms for finding a fixed point of nonexpansive or quasi-nonexpansive operators in the literature \cite{liang2016convergence,borwein2017convergence,bravo2018rates,themelis2019supermann}, where a central/global coordinator or computing unit is able to access all information of the studied problem. It is known that the typical Picard iteration in general does not converge for nonexpansive operators (e.g., a simple example is the operator $-Id$ with nonzero initial points, where $Id$ is the identity operator), although it usually performs well for contractive operators. For nonexpansive operators, one prominent algorithm is the so-called Krasnosel'ski\u{\i}-Mann (KM) iteration \cite{mann1953mean,krasnosel1955two}, and it is shown to converge weakly to a fixed point of a nonexpansive operator in real Hilbert spaces under mild conditions \cite{reich1979weak}.

In recent few decades, distributed (or decentralized) algorithms have been an active topic in a wide range of domains, including fixed point theory, computer science, game theory, and control theory, and so on, mostly inspired by the fact that distributed algorithms, in contrast with centralized ones, possess a host of fascinating advantages, such as low cost, robustness to failures or antagonistic attacks, privacy preservation, low computational complexity, and so on. Distributed algorithms do not assume global/central coordinators or computing units and instead a finite group of agents (e.g., computing units and robots, and so on.), who may be spatially separated, aim to solve a global problem in a collaborative manner by local information exchanges. Wherein, local information exchanges are often depicted by a simple graph, connoting that every agent can interact with only a subset of agents, instead of all agents. Along this line, distributed algorithms have thus far been investigated extensively under both fixed and time-varying communication graphs in distributed optimization \cite{nedic2009distributed,liu2017convergence,li2019distributed}, game theory \cite{mejia2019solutions}, and multi-agent systems/networks \cite{ren2008distributed,li2018quantized}, to just name a few.

In more recent years, distributed algorithms have received a growing attention in the fixed point finding problem \cite{fullmer2018distributed,fullmer2016asynchronous,liu2017distributed,alaviani2019distributed,li2019distributed3,li2019distributed5}. For instance, a synchronous distributed algorithm was proposed in \cite{fullmer2018distributed} for computing a common fixed point of a collection of paracontraction operators, and for the same problem, an asynchronous distributed algorithm was developed in \cite{fullmer2016asynchronous}. Meanwhile, different from paracontraction operators, another type of operators (i.e., strongly quasi-nonexpansive operators) was addressed for the common fixed point seeking problem in \cite{liu2017distributed} by designing a distributed algorithm in the presence of time-varying delays under the assumption that the communication graph is repeatedly jointly strongly connected. It should be noted that many interesting problems can boil down to the common fixed point finding problem, such as convex feasibility problems \cite{necoara2018randomized,kruger2018set} and the problem of solving linear algebraic equations in a distributed fashion \cite{mou2015distributed,wang2019distributed,alaviani2018distributed}, and so forth. For example, the linear algebraic equation solving problem was formulated such that the distribution of random communication graphs is not required, which include asynchronous updates and/or unreliable interconnection protocols. Notice that all the aforementioned works are in the Euclidean space. Regarding the Hilbert space, the authors in \cite{alaviani2019distributed} investigated distributed optimization under random and directed interconnection graphs, where a distributed algorithm was proposed and shown to be convergent in both almost surely and mean square sense along with the introduction of a novel convex minimization problem over the fixed-value point set of a nonexpansive random operator. In addition, the authors in \cite{li2019distributed3} took into account the common fixed point finding problem for a finite collection of nonexpansive operators, where two distributed algorithms were proposed with a full coordinate updating and a random block-coordinate updating, respectively, and compared with \cite{fullmer2018distributed,fullmer2016asynchronous,liu2017distributed}, the contributions of \cite{li2019distributed3} lie in the study of real Hilbert spaces, the consideration of operator errors, and the establishment of a sublinear convergence speed. Furthermore, a more general scenario, where no common fixed points are assumed for all local operators, was investigated in \cite{li2019distributed5}, where two distributed algorithms were devised to resolve the problem. It is noteworthy that, to our best knowledge, \cite{li2019distributed5} is the first to investigate the fixed point finding problem of a global operator in real Hilbert spaces, where the global operator is an average of local operators over a multi-agent network. Nevertheless, the convergence rate is not analyzed for the proposed algorithms in \cite{li2019distributed5}.

Motivated by the above facts, the purpose of this paper is to further investigate the fixed point finding problem of a quasi-nonexpansive global operator over a time-invariant, directed, and unbalanced communication graph. Two scenarios are taken into account, that is, the global operator is sum separable and block separable. In the first case, the global operator is composed of a sum of local operators, and in the second case, the global operator is comprised of a family of local block operators. That is, in both cases the global operator is separable and consists of local operators, which are assumed to be Lipschitz and are only privately accessible to each individual agent, thereby needing all agents to tackle the global problem in a collaborative manner. The contributions of this paper are threefold as follows.
\begin{enumerate}
  \item For the first case, a distributed algorithm, called \underline{d}istributed quasi-averaged \underline{o}perator \underline{t}racking algorithm (DOT), is developed and shown to be convergent to a fixed point of the global operator at a linear rate under a linear regularity condition. Compared with the closely related work \cite{li2019distributed5}, where no convergence speed is provided, a different algorithm is developed here and shown to be convergent at a linear rate. It should be noted that the problem here is more general than the common fixed point seeking problem \cite{fullmer2016asynchronous,fullmer2018distributed,liu2017distributed,li2019distributed3}, where all local operators are assumed to have at least one common fixed point, while this assumption is dropped here. As a special case, linear convergence can also be ensured for the common fixed point seeking problem. In contrast, \cite{li2019distributed3} only provides a sublinear rate for nonexpansive operators.
  \item For the second case, a distributed algorithm, called \underline{d}istributed quasi-averaged \underline{o}perator \underline{p}laying algorithm (DOP), is proposed, which is shown to be linearly convergent to a fixed point of the global operator under the linear regularity condition. To our best knowledge, this is the first to study the block separable case in a decentralized manner.
  \item The studied setups in this paper provide a unified framework for a host of interesting problems. For example, the proposed DOT and DOP algorithms can be leveraged to resolve distributed optimization and multi-player games under partial-decision information.
\end{enumerate}

A preliminary version of the paper was presented at a conference \cite{li2020dot}. The present paper extends the result of \cite{li2020dot} in several ways. \cite{li2020dot} only considers the sum separable case without providing detailed proofs of the main result. In comparison, a full proof of the main result (i.e., Theorem \ref{t1}) for the sum separable case is provided. Besides, one more scenario is investigated in this paper, i.e., the block separable case (see Theorem \ref{t3}). Also, one more applications is presented here, i.e., multi-player games under partial-decision information, along with one more numerical example.

The structure of this paper is as follows. Some basic knowledge and the problem formulation are introduced in Section II, and the first case with the global operator being sum separable is addressed in Section III, followed by the second case with the global operator being block-coordinate separable in Section IV. Several applications are provided in Section V. Some numerical examples are presented in Section VI, and the conclusion is drawn in Section VII.

\section{Preliminaries and Problem Formulation}\label{s2}

\subsection{Notations}\label{s2.1}

Let $\mathcal{H}$ be a real Hilbert space with inner product $\langle\cdot,\cdot\rangle$ and associated norm $\|\cdot\|$. Define $[N]:=\{1,2,\ldots,N\}$ for any integer $N>0$, and denote by $col(z_1,\ldots,z_k)$ the column vector or matrix by stacking up $z_i,i\in [k]$. Given an integer $n>0$, denote by $\mathbb{R}$, $\mathbb{R}^n$, $\mathbb{R}^{n\times n}$, and $\mathbb{N}$ the sets of real numbers, $n$-dimensional real vectors, $n\times n$ real matrices, and nonnegative integers, respectively. Let $P_X(\cdot)$ represent the projection operator onto a closed and convex set $X\subseteq\mathcal{H}$, i.e., $P_X(z):=\mathop{\arg\min}_{x\in X}\|z-x\|$ for $z\in\mathcal{H}$. Moreover, denote by $I$, $Id$, and $\otimes$ the identity matrix of appropriate dimension, the identity operator, and the Kronecker product, respectively. Let ${\bf 1}_n$ be an $n$-dimensional vector with all entries $1$ for an integer $n>0$, and the subscript is omitted when the dimension is clear in the context. $d_X(z):=\inf_{x\in X}\|z-x\|$ denotes the distance from $z\in\mathcal{H}$ to the set $X$. For an operator $T:\mathcal{H}\to\mathcal{H}$, define $Fix(T):=\{x\in\mathcal{H}|T(x)=x\}$ to be the set of fixed points of $T$ and $T_\beta:=Id+\beta(T-Id)$, called a {\em $\beta$-relaxation} of $T$ with a relaxation parameter $\beta\geq 0$. Denote by $M_\infty$ the infinite power of a square matrix $M$, i.e., $M_\infty=\lim_{k\to\infty} M^k$, if it exists, and let $\rho(M)$ and $det(M)$ be the spectral radius and determinant of $M$, respectively.

\subsection{Operator Theory}\label{s2.2}

Consider an operator $T:S\to\mathcal{H}$ for a nonempty set $S\subseteq\mathcal{H}$. $T$ is called {\em $L$-Lipschitz (continuous)} for a constant $L>0$ if
\begin{align}
\|T(x)-T(y)\|\leq L\|x-y\|,~~~\forall x,y\in S.         \label{1}
\end{align}
Further, $T$ is called {\em nonexpansive} (resp. {\em contractive}) if $L=1$ (resp. $L<1$), {\em quasi-nonexpansive (QNE)} if (\ref{1}) holds with $L=1$ for all $x\in S$ and $y\in Fix(T)$, and {\em $\rho$-strongly quasi-nonexpansive ($\rho$-SQNE)} for $\rho>0$ if it holds that for all $x\in S$ and $y\in Fix(T)$,
\begin{align}
\|T(x)-y\|^2\leq \|x-y\|^2-\rho\|x-T(x)\|^2.               \label{5}
\end{align}

$T$ is called {\em $\eta$-averaged} (resp. {\em $\eta$-quasi-averaged}) for $\eta\in (0,1)$ if it can be written as
\begin{align}
T=(1-\eta)Id+\eta R,         \label{2}
\end{align}
where $R$ is some nonexpansive (resp. quasi-nonexpansive) operator.

The aforementioned concepts can be found in Section 4 of \cite{bauschke2017convex}, where quasi-averaged operators are defined here as an analogue of averaged operators. It is well known that when $T$ is QNE, the fixed point set $Fix(T)$ is closed and convex (cf., Corollary 4.24 in \cite{bauschke2017convex}).

The operator $T$ is said {\em boundedly linearly regular} if for any bounded set $\mathbb{K}\subseteq S$, there exists a constant $\omega>0$ such that
\begin{align}
d_{Fix(T)}(x)\leq \omega \|x-Tx\|,~~~\forall x\in \mathbb{K}         \nonumber
\end{align}
and $T$ is called {\em linearly regular} if $\omega$ is independent of $\mathbb{K}$ \cite{bauschke1996projection}.

It is easy to observe that the (bounded) linear regularity means that the distance between $x$ and $Tx$ is lower bounded by a scaled distance from the vector $x$ to the set $Fix(T)$, and bounded linear regularity is weaker than linear regularity. For instance, the projection operator $P_C$ is linearly regular with constant $1$, where $C\subseteq\mathcal{H}$ is nonempty, closed and convex, and when $\mathcal{H}=\mathbb{R}$, the thresholder operator
\begin{align}
T(x)=\left\{
       \begin{array}{ll}
         0, & \text{if~}|x|\leq 1 \\
         x-1, & \text{if~}x>1 \\
         x+1, & \text{if~}x<-1
       \end{array}
     \right.               \nonumber
\end{align}
is boundedly linear regular with $\omega=\max\{|x|,1\}$, but not linearly regular \cite{bauschke2015linear}. Moreover, it has been shown in \cite{banjac2018tight} that linear regularity is necessary and sufficient for global Q-linear convergence to a fixed point for an SQNE operator. More details on (bounded) linear regularity can be found in \cite{bauschke2015linear,banjac2018tight,cegielski2018regular}.

\subsection{Problem Formulation}\label{s2.3}

The aim of this paper is to compute a fixed point of a {\em global operator} $F:\mathcal{H}\to\mathcal{H}$, i.e.,
\begin{align}
\text{find}~x\in\mathcal{H},~~\text{s.t.}~~x\in Fix(F).            \label{6C}
\end{align}

It is worth mentioning that problem (\ref{6C}) in real Hilbert spaces (but not in $\mathbb{R}^n$) can find applications, for example, in digital signal processing and $L^2([0,\pi])$ (i.e., the space of all square integrable functions $f:[0,\pi]\to \mathbb{R}$) \cite{debnath2005introduction}, and so on.

In this paper, no global/central coordinator, master, or computing unit is assumed to exist for problem (\ref{6C}), and however the global operator $F$ is separable and consists of local operators, which are privately accessible to each individual agent in a network. Briefly speaking, only partial information of $F$ can be privately known by each agent in a network, which is interesting and realistic in large-scale problems, as extensively studied in distributed optimization and distributed machine learning, and so on. Specifically, we focus on two scenarios: 1) $F$ is sum separable and 2) $F$ is block separable, as elaborated below.

{\em Case 1: Sum Separable.} $F$ is {\em sum separable} over a network of $N$ agents, that is,
\begin{align}
\text{{\bf (Problem I)}~~~~~find}~x\in Fix(F),~~F=\frac{1}{N}\sum_{i=1}^N F_i,            \label{6}
\end{align}
where each $F_i:\mathcal{H}\to\mathcal{H}$ is a {\em local operator}, only privately accessible to agent $i$ for $i\in[N]$. Note that the formulation (\ref{6}) is also investigated in \cite{li2019distributed5}, where, however, no convergence rates are provided, while a linear convergence speed is established in this paper. And the formulation (6) is more general than the common fixed point finding problem \cite{fullmer2018distributed,fullmer2016asynchronous,liu2017distributed,alaviani2019distributed,li2019distributed3,li2019distributed5} and the linear algebraic equation solving problem \cite{mou2015distributed,wang2019distributed,alaviani2018distributed}, as discussed in the introduction section.

{\em Case 2: Block (or Block-Coordinate) Separable.} In this case, $\mathcal{H}=\mathcal{H}_1\oplus\cdots\oplus\mathcal{H}_N$ is the direct Hilbert sum, where every $\mathcal{H}_i,i\in[N]$ is a real Hilbert space, along with the same inner product $\langle\cdot,\cdot\rangle$ and associated norm $\|\cdot\|$ as $\mathcal{H}$, i.e., $\mathcal{H}$ and $\mathcal{H}_1$ are consistent when $N=1$. Let
$x=(x_1,\ldots,x_N)$ denote a generic vector in $\mathcal{H}$ with $x_i\in\mathcal{H}_i,i\in[N]$. Then the global operator $F$ can be written as a block-coordinate version $F=(\texttt{F}_1,\ldots,\texttt{F}_N)$, where $\texttt{F}_i:\mathcal{H}\to\mathcal{H}_i$ for $i\in[N]$, i.e., $F(x)=(\texttt{F}_1(x),\ldots,\texttt{F}_N(x))$ for $x\in\mathcal{H}$. In this setup, $F$ is {\em block (or block-coordinate) separable} over a network of $N$ agents, i.e.,
\begin{align}
\text{{\bf (Problem II)}~~~~~find}~x\in Fix(F),~F=(\texttt{F}_1,\ldots,\texttt{F}_N),            \label{6b}
\end{align}
where each $\texttt{F}_i$ is a {\em local operator}, only privately accessible to agent $i$ for all $i\in[N]$. Also, each agent $i$ only knows its own vector $x_i$ with no knowledge of $x_j$'s for all $j\neq i$. To our best knowledge, this scenario is novel and also practical. For example, in multi-player games, it is difficult or impossible for a global/central coordinator or master to know the whole coordinates of $x$ due to privacy.

With the above discussion, the objective of this paper is to develop distributed (or decentralized) algorithms to solve problems I and II in real Hilbert spaces.

\begin{remark}
To illustrate applications of the above studied problems, a simple example in function approximation is provided here, which is useful such as in reinforcement learning (cf., Chapter 9 in \cite{sutton2018reinforcement}). Consider a (reward) function $\texttt{r}:\mathbb{R}^n\to\mathbb{R}$, which may be unknown in reality, and let us approximate it by $\sum_{j=1}^\infty w_j \exp(-\frac{\|x-c_j\|^2}{2\sigma_j^2})$ based on radial basis functions (RBFs), where $c_j$ and $\sigma_j$ are some prespecified parameters (e.g., feature's center state and feature's width in reinforcement learning, respectively), $w=(w_1,w_2,\ldots)\in\ell^2$ is the variable to be optimized, and $\ell^2$ is the space of square-summable sequences which is an infinite-dimensional Hilbert space. Then the performance is to minimize the approximation error $f(w):=\frac{1}{N}\sum_{i=1}^N f_i(w)$ with $f_i(w):=\frac{1}{N_i}\sum_{i=1}^{N_i}|\texttt{r}(s_i)-\sum_{j=1}^\infty w_j \exp(-\frac{\|s_i-c_j\|^2}{2\sigma_j^2})|^2$, where $\{s_i\}_{i=1}^{N_i}$ are a set of sample data privately known by agent $i$. It is easy to verify that $f_i$ is differentiable and convex with $L_i$-Lipschitz gradient for some constant $L_i>0$, thus implying that the operator $F_i:w\mapsto w-\xi\nabla f_i(w)$ is nonexpansive for a constant $\xi\in(0,2/L)$ with $L:=\max_{i\in[N]}L_i$ (Lemma 4 in \cite{liang2016convergence}). Then the problem is equivalent to the fixed point finding problem (\ref{6}) with $\mathcal{H}=\ell^2$. More applications will be given in Section \ref{sec-app}.
\end{remark}

\subsection{Graph Theory}\label{s2.4}

The communication pattern among all agents is captured by a simple directed graph, denoted by $\mathcal{G}=(\mathcal{V},\mathcal{E})$, where $\mathcal{V}=[N]$ and $\mathcal{E}\subseteq\mathcal{V}\times\mathcal{V}$ are the node (or agent) and edge sets, respectively. An {\em edge} $(i,j)\in\mathcal{E}$ means that agent $i$ can send information to agent $j$, but not vice versa, and agent $i$ (resp. $j$) is called an {\em in-neighbor} or simply {\em neighbor} (resp. out-neighbor) of agent $j$ (resp. $i$). A graph is called undirected if and only if $(i,j)\in\mathcal{E}$ amounts to $(j,i)\in\mathcal{E}$, and directed otherwise. A {\em directed path} is defined to be a sequence of adjacent edges $(i_1,i_2),(i_2,i_3),\ldots,(i_{l-1},i_{l})$, and a graph is said {\em strongly connected} if any two nodes can be connected by a directed path from one to the other.

\subsection{Assumptions}

With the above preparations, we are now ready to impose some standard assumptions.

\begin{assumption}[Strong Connectivity]\label{a2}
The communication graph $\mathcal{G}$ is strongly connected. Moreover,
\begin{enumerate}
  \item two matrices $A=(a_{ij})\in\mathbb{R}^{N\times N}$ (row-stochastic) and $B=(b_{ij})\in\mathbb{R}^{N\times N}$ (column-stochastic) are arbitrarily assigned to $\mathcal{G}$ with $a_{ii}>0,b_{ii}>0$ for all $i\in[N]$;
  \item denote by the left stochastic eigenvector (resp. right stochastic eigenvector) of $A$ (resp. $B$) associated with the eigenvalue $1$ as $\pi=col(\pi_1,\ldots,\pi_N)$ (resp. $\nu=col(\nu_1,\ldots,\nu_N)$) such that $A_\infty={\bf 1}_N \pi^\top$ (resp. $B_\infty=\nu{\bf 1}_N^\top$) and $\pi_i>0$, $\nu_i>0$ for all $i\in[N]$.
\end{enumerate}
\end{assumption}

It is worth mentioning that the directed graph is not required to be balanced in this paper. Note that, similar to \cite{xin2019distributed}, $A$ and $B$ are consistent with $\mathcal{G}$, that is, $a_{ij}>0$ and $b_{ij}>0$ if and only if $(j,i)\in \mathcal{E}$ for $i\neq j$. Notice that $A$ and $B$ do not need to be doubly stochastic. Additionally, the property in Assumption \ref{a2}.2 can be ensured by the strong connectivity of $\mathcal{G}$ \cite{xin2019distributed}.

\begin{assumption}\label{a0}
$F_i$ is Lipschitz with constant $L_i$ for all $i\in[N]$, i.e., $\|F_i(x)-F_i(y)\|\leq L_i\|x-y\|,~\forall x,y\in\mathcal{H}$.
\end{assumption}

\begin{assumption}\label{a1}
~
\begin{enumerate}
\item $F$ is quasi-nonexpansive with $Fix(F)\neq\emptyset$.
\item $F$ is linearly regular, i.e., there exists a constant $\kappa>0$ such that
\begin{align}
d_{Fix(F)}(x)\leq \kappa \|F(x)-x\|,~~~\forall x\in \mathbb{H}.                \label{FA}
\end{align}
\end{enumerate}
\end{assumption}

\begin{remark}\label{r1}
Note that Assumption \ref{a1} is only made for the global operator $F$, and is not necessary for any local operators $F_i$'s. In addition, the linear regularity is strictly weaker than the strong convexity of functions (Section III in \cite{banjac2018tight}), where the linear regularity of functions is for operators involving functions' gradients, as shown in Section \ref{app-1}.
\end{remark}

\section{The DOT Algorithm for Problem I}\label{s3}

This section aims to develop a distributed algorithm for tackling problem (\ref{6}) which can converge at a linear rate. Without loss of generality, the vectors in $\mathcal{H}$ are viewed as column vectors in this section.

For problem (\ref{6}), if $F$ can be known by a global/central computing unit (or coordinator), then a famous centralized algorithm, called the KM iteration \cite{matsushita2017convergence}, can be exploited, i.e.,
\begin{align}
x_{k+1}=x_k+\alpha_k (F(x_k)-x_k),        \label{8}
\end{align}
where $\{\alpha_k\}_{k\in\mathbb{N}}$ is a sequence of relaxation parameters with $\alpha_k\in[0,1]$. Note that the KM iteration usually applies to nonexpansive operators, but it still works for quasi-nonexpansive operators here under the linear regularity condition in Assumption \ref{a1}. However, the centralized iteration (\ref{8}) is not realistic here since no global/central computing unit (or coordinator) exists in our setting, which hence motivates us to devise distributed (or decentralized) algorithms based on only local information exchanges among all agents.

Motivated by the classical KM iteration and the tracking techniques such as those in \cite{zhang2019decentralized,xin2019distributed,li2018distributedon}, a \underline{d}istributed quasi-averaged \underline{o}perator \underline{t}racking algorithm (DOT) is proposed as
\begin{subequations}
\begin{align}
x_{i,k+1}&=\sum_{j=1}^N a_{ij}x_{j,k}+\alpha\Big(\frac{y_{i,k}}{w_{i,k}}-\sum_{j=1}^N a_{ij}x_{j,k}\Big),              \label{9a}\\
y_{i,k+1}&=\sum_{j=1}^N b_{ij}y_{j,k}+F_i(x_{i,k+1})-F_i(x_{i,k}),                                         \label{9b}\\
w_{i,k+1}&=\sum_{j=1}^N b_{ij}w_{j,k},                                                               \label{9c}
\end{align}               \label{9}
\end{subequations}
\hspace{-0.18cm} where $x_{i,k}$ is an estimate of a fixed point of the global operator $F$ by agent $i$ at time $k\geq 0$ for all $i\in[N]$, and $\alpha\in(0,1)$ is the stepsize to be determined. Set the initial conditions as: arbitrary $x_{i,0}\in\mathcal{H}$, $y_{i,0}=F_i(x_{i,0})$, and $w_{i,0}=1$ for all $i\in[N]$. It is noteworthy that neighboring agents are only involved in (\ref{9}) for each agent due to $a_{ij}=0$ and $b_{ij}=0$ when agent $j$ is not a neighbor of agent $i$.

Roughly speaking, $y_{i,k}$ is employed to track the weighted global operator $\nu_i\sum_{i=1}^N F_i(x_{i,k})$, and meanwhile $w_{i,k}$ is a scalar used to track $\nu_i N$ in order to counteract the imbalance of the matrix $B$ in (\ref{9b}).

For (\ref{9c}), it is easy to verify that each $w_{i,k}$ will exponentially converge to $N\nu_i$. However, invoking the method in \cite{charalambous2015distributed}, it can be concluded that the final value $N\nu_i$ can be evaluated for each agent $i$ in finite time in a distributed manner. Because of this, without loss of generality, algorithm (\ref{9}) can be rewritten by replacing $w_{i,k}$ with $N\nu_i$ as in Algorithm 1.

To facilitate the following analysis, Algorithm 1 can be written in a compact form
\begin{align}
x_{k+1}&=\mathbf{A}x_k+\alpha\Big[\frac{1}{N}(D_\nu^{-1}\otimes Id)y_k-\mathbf{A}x_k\Big],               \label{cf1}\\
y_{k+1}&=\mathbf{B}y_k+\mathbf{F}(x_{k+1})-\mathbf{F}(x_k),                                            \label{cf2}
\end{align}
where $x_k,y_k$ are concatenated vectors of $x_{i,k},y_{i,k}$, respectively, $\mathbf{A}:=A\otimes Id$, $\mathbf{B}:=B\otimes Id$, $D_\nu:=diag\{\nu_1,\ldots,\nu_N\}$, and $\mathbf{F}(z):=col(F_1(z_1),\ldots,F_N(z_N))$ for a vector $z=col(z_1,\ldots,z_N)\in\mathcal{H}^N:=\mathcal{H}\times\cdots\times\mathcal{H}$ (the $N$-fold Cartesian product of $\mathcal{H}$).

\begin{algorithm}
 \caption{\underline{D}istributed Quasi-Averaged \underline{O}perator \underline{T}racking (DOT)}
 \begin{algorithmic}[1]
  \STATE \textbf{Initialization:} Stepsize $\alpha$ in (\ref{cdc13}), communication matrices $A$ and $B$, and local initial conditions $x_{i,0}\in\mathcal{H}$ and $y_{i,0}=F_i(x_{i,0})$ for all $i\in[N]$.
  \STATE \textbf{Iterations:} Step $k\geq 0$: update for each $i\in[N]$:
\begin{subequations}
\begin{align}
x_{i,k+1}&=\sum_{j=1}^N a_{ij}x_{j,k}+\alpha\Big(\frac{y_{i,k}}{N\nu_i}-\sum_{j=1}^N a_{ij}x_{j,k}\Big),              \label{A1-1}\\
y_{i,k+1}&=\sum_{j=1}^N b_{ij}y_{j,k}+F_i(x_{i,k+1})-F_i(x_{i,k}).                                         \label{A1-2}
\end{align}             \label{A1}
\end{subequations}
 \end{algorithmic}
\end{algorithm}

To proceed, it is helpful to introduce two new weighted norms for the Cartesian product $\mathcal{H}^N$, i.e.,
\begin{align}
\|z\|_\pi:=\sqrt{\sum_{i=1}^N \pi_i\|z_i\|^2},~~~\|z\|_\nu:=\sqrt{\sum_{i=1}^N \frac{\|z_i\|^2}{\nu_i}}           \label{cdc3}
\end{align}
for any vector $z=col(z_1,\ldots,z_N)\in\mathcal{H}^N$. Let $\|\cdot\|$ be the natural norm in $\mathcal{H}^N$, i.e., $\|z\|:=\sqrt{\sum_{i=1}^N \|z_i\|^2}$. Additionally, it is also necessary to introduce two weighted norms in $\mathbb{R}^N$ \cite{xin2019distributed}, i.e., for any $x=col(x_1,\ldots,x_N)\in\mathbb{R}^N$,
\begin{align}
\|x\|_\pi:=\sqrt{\sum_{i=1}^N \pi_i x_i^2},~~~\|x\|_\nu:=\sqrt{\sum_{i=1}^N \frac{x_i^2}{\nu_i}}.              \label{cdc4}
\end{align}
Please note that the notations $\|\cdot\|_\pi$ and $\|\cdot\|_\nu$ in (\ref{cdc3}) and (\ref{cdc4}) should be easily distinguished by the context. Accordingly, let us denote by $\|M\|_\pi$ and $\|M\|_\nu$ (resp. $\|M\otimes T\|_\pi$ and $\|M\otimes T\|_\nu$) the norms for a matrix $M\in\mathbb{R}^{N\times N}$ (resp. a matrix $M\in\mathbb{R}^{N\times N}$ and an operator $T$ in $\mathcal{H}$) induced by $\|\cdot\|_\pi$ and $\|\cdot\|_\nu$ in (\ref{cdc4}) (resp. (\ref{cdc3})), respectively.

It is easy to see that the natural norm $\|\cdot\|$ is equivalent to $\|\cdot\|_\pi$, $\|\cdot\|_\nu$ in (\ref{cdc3}), (\ref{cdc4}), and thus to the induced matrix norms, that is, there are positive constants $c_i,i\in[4]$ such that
\begin{align}
c_1\|\cdot\|&\leq \|\cdot\|_\pi\leq c_2\|\cdot\|,                   \label{eq1}\\
c_3\|\cdot\|_\nu&\leq \|\cdot\|_\pi\leq c_4\|\cdot\|_\nu.           \label{eq2}
\end{align}

Then the following results can be obtained.
\begin{lemma}[\cite{xin2019distributed}]\label{l2}
For all $x\in\mathbb{R}^N$, there hold
\begin{align}
&\|Ax-A_\infty x\|_\pi\leq \rho_1\|x-A_\infty x\|_\pi,            \label{cdc5}\\
&\|Bx-B_\infty x\|_\nu\leq \rho_2\|x-B_\infty x\|_\nu,            \label{cdc6}\\
&\|A\|_\pi=\|A_\infty\|_\pi=\|I_N-A_\infty\|_\pi=1,               \label{cdc7}\\
&\|B\|_\nu=\|B_\infty\|_\nu=\|I_N-B_\infty\|_\nu=1,               \label{cdc8}
\end{align}
where $\rho_1:=\|A-A_\infty\|_\pi<1$ and $\rho_2:=\|B-B_\infty\|_\nu<1$.
\end{lemma}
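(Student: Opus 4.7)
The plan is to exploit the fact that the weighted norms $\|\cdot\|_\pi$ and $\|\cdot\|_\nu$ are tailored precisely so that the row‑stochastic matrix $A$ (with left Perron vector $\pi$) and the column‑stochastic matrix $B$ (with right Perron vector $\nu$) behave as non‑expansions. By symmetry, it suffices to describe the argument for $A$ and $\pi$; the same template works verbatim for $B$ and $\nu$ after observing that $D_\nu^{-1}BD_\nu$ is row‑stochastic with $\mathbf{1}$ as right and $\nu$ as left eigenvector.

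First I would establish the norm equalities \eqref{cdc7}. For $\|A\|_\pi\le 1$, I expand
\begin{align*}
\|Ax\|_\pi^2 = \sum_i \pi_i\Bigl(\sum_j a_{ij} x_j\Bigr)^2 \le \sum_i \pi_i\sum_j a_{ij}x_j^2 = \sum_j\pi_j x_j^2 = \|x\|_\pi^2,
\end{align*}
using Jensen's inequality (valid since $\sum_j a_{ij}=1$) and then $\pi^\top A=\pi^\top$. Equality at $x=\mathbf{1}$ gives $\|A\|_\pi=1$. For $A_\infty=\mathbf{1}\pi^\top$, Cauchy--Schwarz yields $\|A_\infty x\|_\pi=|\pi^\top x|\cdot\|\mathbf{1}\|_\pi\le\|x\|_\pi$ with equality at $x=\mathbf{1}$, and a direct expansion gives $\|(I-A_\infty)x\|_\pi^2=\|x\|_\pi^2-(\pi^\top x)^2\le \|x\|_\pi^2$ with equality on $\ker A_\infty=\{x:\pi^\top x=0\}$.

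Next I would derive the contraction \eqref{cdc5}. The algebraic identity $AA_\infty=A_\infty=A_\infty^2$ (which uses $A\mathbf{1}=\mathbf{1}$) implies
\begin{align*}
(A-A_\infty)x = A(x-A_\infty x),
\end{align*}
and $x-A_\infty x$ always lies in the $A$‑invariant subspace $\ker A_\infty$. Hence it suffices to show that $A$ is a strict $\|\cdot\|_\pi$‑contraction on $\ker A_\infty$, so that $\rho_1:=\|A-A_\infty\|_\pi<1$. For this I return to the Jensen estimate: equality $\|Ax\|_\pi=\|x\|_\pi$ would force $x_j$ to be constant on every set $\{j:a_{ij}>0\}$; because $a_{ii}>0$ this links $x_i$ to each of its out‑neighbours, and strong connectivity of $\mathcal{G}$ (Assumption \ref{a2}) then propagates the equality to all coordinates, giving $x\in\mathrm{span}(\mathbf{1})$. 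Intersecting with $\ker A_\infty$ forces $x=0$. A standard compactness argument on the unit sphere of $\ker A_\infty$ upgrades strict inequality to a uniform bound $\rho_1<1$, and this bound is then $\|A-A_\infty\|_\pi$ by definition. Inequality \eqref{cdc5} follows by applying the induced matrix norm to $(A-A_\infty)(x-A_\infty x)$.

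For $B$ and \eqref{cdc6}, \eqref{cdc8} the computations are parallel: using column‑stochasticity $\sum_i b_{ij}=1$ and $B\nu=\nu$, Cauchy--Schwarz yields $\|Bx\|_\nu\le\|x\|_\nu$, equality at $x=\nu$ giving $\|B\|_\nu=1$; and $B_\infty=\nu\mathbf{1}^\top$ satisfies $B_\infty B=B_\infty=B_\infty^2$, so $(B-B_\infty)x=B(x-B_\infty x)$ with $x-B_\infty x\in\ker B_\infty=\{x:\mathbf{1}^\top x=0\}$, and the strict‑contraction step mirrors the $A$ case. The only subtlety — and the main obstacle — is the strict contraction on the invariant subspace: one must invoke both $a_{ii}>0$ (to obtain the aperiodicity substitute) and strong connectivity to exclude equality in Jensen for any nonzero $x\in\ker A_\infty$; without the self‑loop assumption the bound $\rho_1<1$ need not hold on a single step (only on a joint power), which would break the one‑step contraction needed later in the analysis.
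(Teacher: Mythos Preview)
Your argument is correct: the Jensen/Cauchy--Schwarz expansions give the norm equalities, the identity $AA_\infty=A_\infty=A_\infty^2$ reduces \eqref{cdc5} to a bound on $\ker A_\infty$, and the equality analysis in Jensen together with $a_{ii}>0$ and strong connectivity yields the strict contraction $\rho_1<1$ via compactness. The reduction for $B$ through the similarity $D_\nu^{-1}BD_\nu$ is also correct (it is row-stochastic with left Perron vector $\nu$, and the substitution $y=D_\nu^{-1}x$ turns $\|\cdot\|_\nu$ for $B$ into the $\pi$-type norm with weight $\nu$ for the transformed matrix).

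The paper, however, does not prove this lemma at all: it is imported verbatim from \cite{xin2019distributed} and stated without proof. So there is nothing to compare at the level of argument; your write-up supplies a self-contained justification that the paper simply outsources. If anything, your proof is more informative for the reader than the bare citation, and the explicit identification of where $a_{ii}>0$ and strong connectivity enter (to rule out equality in Jensen on $\ker A_\infty$) is exactly the point that would otherwise be hidden behind the reference.
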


\begin{lemma}\label{l3}
For all $z\in\mathcal{H}^N$, the following statements hold
\begin{align}
\|\mathbf{A}z-\mathbf{A}_\infty z\|_\pi &\leq \rho_1\|z-\mathbf{A}_\infty z\|_\pi,        \label{cdc9-1}\\
\|\mathbf{B}z-\mathbf{B}_\infty z\|_\nu &\leq \rho_2\|z-\mathbf{B}_\infty z\|_\nu,        \label{cdc9-2}\\
\|I_N\otimes Id-\mathbf{A}_\infty\|_\pi &=\|I_N-A_\infty\|_\pi=1,                       \label{cdc9-3}
\end{align}
where $\mathbf{A}_\infty:=A_\infty\otimes Id$ and $\mathbf{B}_\infty:=B_\infty\otimes Id$.
\end{lemma}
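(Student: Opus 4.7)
The plan is to reduce each of the three Hilbert-space identities in Lemma \ref{l3} to its $\mathbb{R}^N$ counterpart already established in Lemma \ref{l2} via a coordinate expansion along an orthonormal basis of $\mathcal{H}$. Fix an orthonormal basis $\{e_k\}$ of $\mathcal{H}$ and, for any $z=col(z_1,\ldots,z_N)\in\mathcal{H}^N$, set $\zeta^{(k)}:=col(\langle z_1,e_k\rangle,\ldots,\langle z_N,e_k\rangle)\in\mathbb{R}^N$. Parseval's identity gives the bookkeeping formula $\|z\|_\pi^2=\sum_k\|\zeta^{(k)}\|_\pi^2$, and the Kronecker structure $\mathbf{A}=A\otimes Id$ ensures that the $k$-th Fourier coefficient of $(\mathbf{A}z)_i$ is exactly $(A\zeta^{(k)})_i$, with the analogous identity for $\mathbf{A}_\infty$, $\mathbf{B}$, and $\mathbf{B}_\infty$ (using $\|\cdot\|_\nu$ in place of $\|\cdot\|_\pi$ in the latter two cases).

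Once this dictionary is set up, inequality (\ref{cdc9-1}) becomes a coordinate-wise application of (\ref{cdc5}): squaring $\|A\zeta^{(k)}-A_\infty\zeta^{(k)}\|_\pi\le\rho_1\|\zeta^{(k)}-A_\infty\zeta^{(k)}\|_\pi$ and summing over $k$ reassembles, via Parseval, into $\|\mathbf{A}z-\mathbf{A}_\infty z\|_\pi^2\le\rho_1^2\|z-\mathbf{A}_\infty z\|_\pi^2$. Inequality (\ref{cdc9-2}) follows by the identical computation with $A,A_\infty,\rho_1,\pi$ replaced by $B,B_\infty,\rho_2,\nu$ and invoking (\ref{cdc6}) instead of (\ref{cdc5}).

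For (\ref{cdc9-3}), the second equality is exactly (\ref{cdc7}). For the first, the same Parseval argument yields the upper bound $\|(I_N\otimes Id-\mathbf{A}_\infty)z\|_\pi\le\|I_N-A_\infty\|_\pi\|z\|_\pi$ after summing $\|(I_N-A_\infty)\zeta^{(k)}\|_\pi\le\|I_N-A_\infty\|_\pi\|\zeta^{(k)}\|_\pi$ over $k$. The matching lower bound is obtained by testing on vectors $z$ with $z_i=v_i e$ for arbitrary $v=col(v_1,\ldots,v_N)\in\mathbb{R}^N$ and a fixed unit vector $e\in\mathcal{H}$; for such $z$ both the $\mathcal{H}^N$-norms collapse to the corresponding $\mathbb{R}^N$-norms, so the induced operator norm on $\mathcal{H}^N$ is at least the induced matrix norm on $\mathbb{R}^N$, and equality follows.

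The only subtlety worth flagging is the legitimacy of the basis expansion when $\mathcal{H}$ is not assumed separable, but this is harmless because any single element has at most countable support in any orthonormal basis, so all sums over $k$ are countable. Apart from this, the proof is essentially a transcription of Lemma \ref{l2} across the Kronecker factor $Id$, and I anticipate no real obstacle beyond keeping the $\pi$- versus $\nu$-weights aligned with the correct norm in each expression.
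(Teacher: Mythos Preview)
Your proof is correct, but it proceeds along a different line than the paper's. The paper first uses the idempotent identity $\mathbf{A}\mathbf{A}_\infty=\mathbf{A}_\infty\mathbf{A}_\infty=\mathbf{A}_\infty$ to factor $\mathbf{A}z-\mathbf{A}_\infty z=(\mathbf{A}-\mathbf{A}_\infty)(z-\mathbf{A}_\infty z)$, reduces the problem to computing the induced operator norm $\|\mathbf{A}-\mathbf{A}_\infty\|_\pi$ on $\mathcal{H}^N$, and then evaluates that norm by the tensor identity $\|x\otimes y\|_\pi=\|x\|_\pi\|y\|$, restricting the supremum to simple tensors $x\otimes y$. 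Your route instead expands $z$ along an orthonormal basis of $\mathcal{H}$ and applies (\ref{cdc5}) (resp.\ (\ref{cdc6})) to each coefficient vector $\zeta^{(k)}\in\mathbb{R}^N$ directly, summing via Parseval.

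Your approach is arguably more self-contained: it never needs the intermediate claim that the operator norm of $M\otimes Id$ on $\mathcal{H}^N$ is attained (or bounded above) on simple tensors, which the paper asserts in its equation for $\|\mathbf{A}-\mathbf{A}_\infty\|_\pi$ without explicit justification. The paper's route, on the other hand, isolates the reusable identity $\|M\otimes Id\|_\pi=\|M\|_\pi$, which it later invokes verbatim (e.g.\ for $\|D_\nu^{-1}\otimes Id\|_\pi$ and $\|\mathbf{A}-I_N\otimes Id\|$ in the proof of Theorem~\ref{t1}); your argument proves the inequalities (\ref{cdc9-1})--(\ref{cdc9-2}) directly without passing through this operator-norm identity, so if you go this way you may want to record that identity separately for those later uses. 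Your handling of (\ref{cdc9-3}) --- upper bound via Parseval, lower bound via the embedding $v\mapsto col(v_1 e,\ldots,v_N e)$ --- is essentially the missing half of the paper's tensor-supremum argument and is a clean way to close it.
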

\begin{proof}
The proof can be found in Appendix A.
\end{proof}

\begin{lemma}[\cite{horn2012matrix}]\label{l4}
For an irreducible nonnegative matrix $M\in\mathbb{R}^{n\times n}$, it is primitive if it has at least one non-zero diagonal entry.
\end{lemma}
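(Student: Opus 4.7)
The plan is to prove this via the directed graph associated to $M$. Recall that a nonnegative matrix $M$ is irreducible if and only if its directed graph $G_M$ (with an edge $i\to j$ whenever $M_{ij}>0$) is strongly connected, and $M$ is primitive if and only if $M^k$ is entrywise positive for some $k\geq 1$, which in turn is equivalent to $G_M$ being strongly connected and aperiodic (gcd of its cycle lengths equal to $1$). I would use these standard equivalences and reduce primitivity to a combinatorial statement about cycle lengths in $G_M$.

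First, I would translate the hypotheses: irreducibility gives strong connectivity of $G_M$, and the assumption that some diagonal entry $M_{ii}>0$ gives a self-loop at vertex $i$, i.e., a cycle of length $1$. Since the gcd of any set of positive integers that contains $1$ is $1$, the period of $G_M$ is $1$, so $G_M$ is aperiodic. Combined with strong connectivity, this is exactly the Frobenius characterization of primitivity, yielding the claim.

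If one prefers an elementary argument that avoids invoking the Perron–Frobenius cycle-length theorem as a black box, the alternative plan is direct: fix any two vertices $j,\ell$. By strong connectivity there is a directed walk from $j$ to $\ell$ of some length $r_{j\ell}\leq n-1$. By inserting the self-loop at $i$ and using (again by strong connectivity) a walk $j\to i\to \ell$, one can construct walks from $j$ to $\ell$ of every sufficiently large length; taking $k$ larger than the maximum of these thresholds shows $(M^k)_{j\ell}>0$ for all $j,\ell$, hence $M^k>0$ and $M$ is primitive.

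The statement is a classical consequence of Perron–Frobenius theory and the argument presents no real obstacle; the only subtlety is making sure to invoke the correct equivalent definition of primitivity (positivity of some power, equivalently aperiodicity of the strongly connected graph). Since the lemma is cited from \cite{horn2012matrix}, a one-line reference combined with the above graph-theoretic observation should suffice.
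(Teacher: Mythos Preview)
Your proposal is correct: the graph-theoretic argument via aperiodicity is the standard way to establish this fact, and both the one-line version (a self-loop forces $\gcd$ of cycle lengths to be $1$) and the direct construction of walks of all sufficiently large lengths are valid.

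The paper itself does not prove this lemma at all; it simply cites it from \cite{horn2012matrix} as a known result. So there is nothing to compare against, and your argument is more than adequate as a self-contained justification.
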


\begin{lemma}[\cite{horn2012matrix}]\label{l5}
For an irreducible nonnegative matrix $M\in\mathbb{R}^{n\times n}$, there hold (i) $\rho(M)>0$ is an eigenvalue of $M$, (ii) $Mx=\rho(M)x$ for some positive vector $x$, and (iii) $\rho(M)$ is an algebraically simple eigenvalue.
\end{lemma}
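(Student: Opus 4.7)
The plan is to establish the three claims via the classical Perron--Frobenius route. First I would record the standard consequence of irreducibility that $(I+M)^{n-1}$ is entrywise strictly positive: for every ordered pair $(i,j)$, irreducibility guarantees a directed path of length at most $n-1$ in the digraph of $M$, which translates into a positive contribution to $\bigl((I+M)^{n-1}\bigr)_{ij}$. This fact, together with $M$ being nonnegative and nonzero, rules out nilpotence and already gives $\rho(M)>0$. To confirm that $\rho(M)$ is actually an eigenvalue (part (i)), I would perturb $M$ to $M+\varepsilon \mathbf{1}\mathbf{1}^\top$, apply the classical Perron theorem for strictly positive matrices (which yields a simple positive eigenvalue equal to the spectral radius), and let $\varepsilon\downarrow 0$, using continuity of the spectrum.

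For (ii) I would use the Collatz--Wielandt functional
\begin{equation*}
r(x):=\min_{i:\,x_i>0}\frac{(Mx)_i}{x_i},\qquad x\in\mathbb{R}_{\ge 0}^n\setminus\{0\},
\end{equation*}
and show that $r$ attains its supremum on the simplex $\Delta=\{x\ge 0:\mathbf{1}^\top x=1\}$ at some $x^\star\in\Delta$ with $r(x^\star)=\rho(M)$. The maximizer must satisfy $Mx^\star=\rho(M)x^\star$: otherwise $v:=Mx^\star-\rho(M)x^\star$ is nonnegative and nonzero, so $w:=(I+M)^{n-1}x^\star$ is strictly positive while $Mw-\rho(M)w=(I+M)^{n-1}v>0$ entrywise, contradicting the maximality of $r(x^\star)$. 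Applying $(I+M)^{n-1}$ to $Mx^\star=\rho(M)x^\star$ and using the strict positivity of $(I+M)^{n-1}$ then forces $x^\star$ itself to be strictly positive.

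For (iii), algebraic simplicity, I would proceed in two stages. To rule out a second linearly independent eigenvector at $\rho:=\rho(M)$, suppose $My=\rho y$ for some real $y$ not parallel to $x^\star$. Then for suitable $t\in\mathbb{R}$ the vector $x^\star+ty$ is nonnegative, nonzero, and has a zero component, yet it is still an eigenvector with eigenvalue $\rho$. Applying $(I+M)^{n-1}$ preserves the eigenvector relation but makes the result strictly positive, the contradiction giving geometric multiplicity one. To rule out Jordan blocks of size $\ge 2$, I would apply (ii) to $M^\top$ to obtain a strictly positive left eigenvector $y^\star>0$ with $y^{\star\top}M=\rho y^{\star\top}$; then the standard identity
\begin{equation*}
\frac{d}{dt}\det(tI-M)\Big|_{t=\rho}=\mathrm{tr}\bigl(\mathrm{adj}(\rho I-M)\bigr)
\end{equation*}
combined with the observation that $\mathrm{adj}(\rho I-M)$ is a rank-one, sign-definite multiple of $x^\star y^{\star\top}$ (because its columns lie in the one-dimensional kernel of $\rho I-M$ and its rows in the kernel of $(\rho I-M)^\top$) shows that this derivative is nonzero, so $\rho$ is a simple root of the characteristic polynomial.

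The main obstacle is part (iii): the geometric-multiplicity step is short, but ruling out Jordan blocks requires the adjugate computation above (or, equivalently, constructing a generalized eigenvector and deriving a contradiction from the boundedness of $\rho^{-k}M^k$ against a positive left/right eigenvector pair). Everything else reduces to careful bookkeeping with the positivity of $(I+M)^{n-1}$ supplied by irreducibility.
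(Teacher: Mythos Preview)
The paper does not prove this lemma at all; it is simply quoted from Horn and Johnson's \emph{Matrix Analysis} as a standard statement of the Perron--Frobenius theorem for irreducible nonnegative matrices, with no argument supplied. Your proposal therefore goes well beyond what the authors do: they cite, you prove.

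Your sketch follows the classical route---positivity of $(I+M)^{n-1}$ from irreducibility, the Collatz--Wielandt characterization for the Perron vector, and the adjugate/left-eigenvector computation for algebraic simplicity---and is essentially correct. One technical point worth tightening: the functional $r$ is not continuous on the boundary of the simplex (where some coordinates vanish the minimum is taken over a shrinking index set), so the bare claim that ``$r$ attains its supremum on $\Delta$'' needs the standard fix of maximizing instead over the compact set $(I+M)^{n-1}\Delta$, on which all vectors are strictly positive and $r$ is continuous, and then noting that passing to $(I+M)^{n-1}x$ never decreases $r(x)$. With that adjustment your outline is sound. For the purposes of this paper, however, a citation to the Perron--Frobenius theorem is all that is expected.
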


\begin{lemma}[\cite{li2020distributed}]\label{l-m}
For $X,Y\in\mathbb{R}^{n\times n}$, let $\lambda$ be a simple eigenvalue of $X$. Denote $u$ and $v$ respectively the left and right eigenvectors of $X$ corresponding to $\lambda$. Then, it holds that
\begin{enumerate}
  \item for each $\epsilon >0$, there exists a $\delta>0$ such that, $\forall t\in\mathbb{C}$ with $|t|<\delta$, there is a unique eigenvalue $\lambda(t)$ of $X+tY$ such that $|\lambda(t)-\lambda-t\frac{u^\top Y v}{u^\top v}|\leq |t|\epsilon$,
  \item $\lambda(t)$ is continuous at $t=0$, and $\lim_{t\to 0}\lambda(t)=\lambda$,
  \item $\lambda(t)$ is differentiable at $t=0$, and $\frac{d\lambda(t)}{dt}\big|_{t=0}=\frac{u^\top Y v}{u^\top v}$.
\end{enumerate}
\end{lemma}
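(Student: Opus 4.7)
The plan is to treat this as the classical first-order perturbation result for a simple eigenvalue and prove it via the implicit function theorem applied to the characteristic polynomial. Define $p(t,\mu) := \det(X + tY - \mu I)$, which is a polynomial (hence holomorphic) in $(t,\mu) \in \mathbb{C}^2$. Because $\lambda$ is a simple eigenvalue of $X$, it is a simple root of $p(0,\cdot)$, so $p(0,\lambda) = 0$ while $\partial_\mu p(0,\lambda) \neq 0$.

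First, by the holomorphic implicit function theorem applied to $p$, there exist neighborhoods $U$ of $0$ and $V$ of $\lambda$ and a unique holomorphic function $\lambda(\cdot) : U \to V$ such that $\lambda(0) = \lambda$ and $p(t,\lambda(t)) = 0$ for all $t \in U$; moreover $\lambda(t)$ is the only eigenvalue of $X+tY$ lying in $V$ for such $t$. This immediately yields the continuity statement in part 2 (in fact analyticity, which is stronger) and the local uniqueness asserted in part 1.

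Next, for the derivative formula in part 3, I would work with a smooth eigenvector branch $v(t)$ satisfying $(X + tY)v(t) = \lambda(t)v(t)$ with $v(0) = v$; such a branch is produced either by a second application of the implicit function theorem or, more robustly, via the Riesz projector $P(t) = \tfrac{1}{2\pi i}\oint_{\partial V}(\zeta I - X - tY)^{-1}\,d\zeta$, which is holomorphic in $t$ and rank-one for $t$ near $0$. Differentiating the eigenvalue equation at $t=0$ gives $Yv + Xv'(0) = \lambda'(0)v + \lambda v'(0)$, and applying $u^\top$ on the left with $u^\top X = \lambda u^\top$ cancels the $v'(0)$ terms and leaves $\lambda'(0) = u^\top Y v / u^\top v$. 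Part 1 then follows by Taylor expansion: $\lambda(t) = \lambda + t\lambda'(0) + o(t)$, so for any $\epsilon > 0$ one can choose $\delta > 0$ small enough that $|\lambda(t) - \lambda - t\lambda'(0)| \leq \epsilon|t|$ whenever $|t| < \delta$.

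The step I expect to be the main obstacle is justifying $u^\top v \neq 0$, so that the derivative formula is well defined. This is precisely where algebraic simplicity of $\lambda$ bites: writing $X$ in Jordan form $X = QJQ^{-1}$, the block associated with $\lambda$ is $1\times 1$, so $v$ can be taken as the corresponding column of $Q$ and $u^\top$ as the matching row of $Q^{-1}$, giving $u^\top v = 1$. Equivalently, by Jacobi's formula one has $\partial_\mu p(0,\lambda) = -\mathrm{tr}(\mathrm{adj}(X-\lambda I)) = -c\, u^\top v$ where $c = \prod_{\mu_j \neq \lambda}(\lambda - \mu_j) \neq 0$, so the non-vanishing of $u^\top v$ is exactly the simplicity condition that powered the implicit function theorem in the first step, making the whole argument internally consistent.
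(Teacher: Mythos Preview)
Your argument is correct and is the standard route to this classical first-order eigenvalue perturbation result: implicit function theorem on the characteristic polynomial for existence and analyticity of the branch $\lambda(t)$, then differentiation of the eigenvalue relation and left-multiplication by $u^\top$ to extract the derivative, with simplicity guaranteeing $u^\top v\neq 0$. All the steps you outline are sound, including the Riesz-projector justification for a smooth eigenvector branch and the Jordan-form verification that $u^\top v\neq 0$.

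However, there is nothing to compare against: the paper does \emph{not} prove this lemma. It is stated with a citation (to \cite{li2020distributed}, and ultimately it is the textbook perturbation result one finds in Horn and Johnson) and used as a black box in the proof of Theorem~\ref{t1} to compute $\frac{d\lambda(\alpha)}{d\alpha}\big|_{\alpha=0}$ for the matrix $M(\alpha)$. So your proposal supplies a proof where the paper offers none; it is entirely appropriate and self-contained.
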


\begin{lemma}\label{l6}
It holds that $\bar{y}_k=\sum_{i=1}^N F_i(x_{i,k})$, where $\bar{y}_k:=\sum_{i=1}^N y_{i,k}$.
\end{lemma}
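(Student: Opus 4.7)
The plan is a straightforward induction on $k$, exploiting the column-stochasticity of $B$ built into the update rule (\ref{A1-2}). This is a classical gradient/operator tracking invariant and the proof is essentially a one-line telescoping once the right sum is formed.

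For the base case $k=0$, I would appeal directly to the initialization specified in Algorithm 1: since $y_{i,0} = F_i(x_{i,0})$ for every $i\in[N]$, summing over $i$ yields $\bar{y}_0 = \sum_{i=1}^N F_i(x_{i,0})$, as desired.

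For the inductive step, assume $\bar{y}_k = \sum_{i=1}^N F_i(x_{i,k})$. Summing (\ref{A1-2}) over $i\in[N]$ gives
\begin{align}
\bar{y}_{k+1} = \sum_{i=1}^N\sum_{j=1}^N b_{ij} y_{j,k} + \sum_{i=1}^N F_i(x_{i,k+1}) - \sum_{i=1}^N F_i(x_{i,k}). \nonumber
\end{align}
By Assumption \ref{a2}, $B$ is column-stochastic, so $\sum_{i=1}^N b_{ij} = 1$ for each $j$, and interchanging the two finite sums yields $\sum_{i=1}^N \sum_{j=1}^N b_{ij} y_{j,k} = \sum_{j=1}^N y_{j,k} = \bar{y}_k$. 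Substituting the inductive hypothesis then cancels the $\sum_i F_i(x_{i,k})$ term with $\bar{y}_k$, leaving $\bar{y}_{k+1} = \sum_{i=1}^N F_i(x_{i,k+1})$, which closes the induction.

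There is no real obstacle here; the only subtlety worth flagging explicitly is that the column-stochasticity of $B$ is precisely the structural property that makes $\mathbf{1}^\top$ a left-invariant direction of $B$, which is what drives the telescoping correction term $F_i(x_{i,k+1}) - F_i(x_{i,k})$ to accumulate exactly into the desired sum without drift. Nothing about Lipschitz continuity, quasi-nonexpansiveness, or linear regularity (Assumptions \ref{a0}--\ref{a1}) is needed for this lemma; it is a purely algebraic conservation law of the iteration.
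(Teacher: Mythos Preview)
Your proof is correct and matches the paper's approach: the paper left-multiplies the compact update (\ref{cf2}) by ${\bf 1}^\top$ to obtain $\bar{y}_{k+1}-\sum_{i}F_i(x_{i,k+1})=\bar{y}_k-\sum_{i}F_i(x_{i,k})$ and then invokes the initialization $y_{i,0}=F_i(x_{i,0})$, which is exactly your induction unwound as a conserved quantity. The only ingredient in either argument is the column-stochasticity of $B$, as you note.
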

\begin{proof}
Left multiplying (\ref{cf2}) by ${\bf 1}^\top$ yields that $\bar{y}_{k+1}=\bar{y}_k+\sum_{i=1}^N F_i(x_{i,k+1})-\sum_{i=1}^N F_i(x_{i,k})$, which further implies that $\bar{y}_k-\sum_{i=1}^N F_i(x_{i,k})=\bar{y}_0-\sum_{i=1}^N F_i(x_{i,0})$. Note that $y_{i,0}=F_i(x_{i,0})$. The conclusion directly follows.
\end{proof}

To move forward, an important result for the convergence analysis is first given below.

%
%

\begin{lemma}\label{l1}
Under Assumption \ref{a1}, if $\alpha\in(0,1-\delta]$, where $\delta\in (0,1)$ is any pre-specified parameter, then there holds
\begin{align}
d_{Fix(F)}(F_\alpha(x))\leq\rho_3 d_{Fix(F)}(x),~~~\forall x\in\mathbb{H}               \label{cdc1}
\end{align}
where $F_\alpha:=Id+\alpha(F-Id)$ is the $\alpha$-quasi-averaged operator of $F$, and
\begin{align}
\rho_3:=1-\frac{\delta\alpha}{4\kappa^2}\in[0,1).        \label{cdc2}
\end{align}
\end{lemma}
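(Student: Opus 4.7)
\medskip

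\noindent\textbf{Proof plan.} My plan is to treat $F_\alpha$ as a convex combination of the identity and $F$, exploit the quasi-nonexpansiveness of $F$ to produce a Fej\'er-type inequality with a strictly positive residual term, and finally convert that residual into a contraction factor using the linear regularity hypothesis (\ref{FA}).

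First I would fix $x\in\mathcal{H}$ and pick a best approximation $y:=P_{Fix(F)}(x)$, noting that $Fix(F_\alpha)=Fix(F)$ for $\alpha\in(0,1)$ so that $y$ is also a fixed point of $F_\alpha$. Writing
\begin{align}
F_\alpha(x)-y=(1-\alpha)(x-y)+\alpha(F(x)-y),\nonumber
\end{align}
and applying the standard convex combination identity
$\|(1-\alpha)u+\alpha v\|^2=(1-\alpha)\|u\|^2+\alpha\|v\|^2-\alpha(1-\alpha)\|u-v\|^2$
with $u=x-y$, $v=F(x)-y$, produces
\begin{align}
\|F_\alpha(x)-y\|^2=(1-\alpha)\|x-y\|^2+\alpha\|F(x)-y\|^2-\alpha(1-\alpha)\|F(x)-x\|^2.\nonumber
\end{align}
Since $F$ is QNE (Assumption \ref{a1}.1) and $y\in Fix(F)$, we have $\|F(x)-y\|\leq\|x-y\|$, and therefore
\begin{align}
\|F_\alpha(x)-y\|^2\leq \|x-y\|^2-\alpha(1-\alpha)\|F(x)-x\|^2.\nonumber
\end{align}

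Next I would feed the linear regularity bound (\ref{FA}) into the residual term, namely $\|F(x)-x\|^2\geq d_{Fix(F)}(x)^2/\kappa^2$. Combined with $\|x-y\|=d_{Fix(F)}(x)$ and $\|F_\alpha(x)-y\|\geq d_{Fix(F)}(F_\alpha(x))$, and using $1-\alpha\geq\delta$ from the hypothesis $\alpha\in(0,1-\delta]$, this yields
\begin{align}
d_{Fix(F)}(F_\alpha(x))^2\leq \Bigl(1-\frac{\alpha\delta}{\kappa^2}\Bigr)d_{Fix(F)}(x)^2.\nonumber
\end{align}
Finally I would extract the square root via the elementary inequality $\sqrt{1-t}\leq 1-t/4$ valid for $t\in[0,1]$ (which follows immediately by squaring), giving the required bound $d_{Fix(F)}(F_\alpha(x))\leq\rho_3\, d_{Fix(F)}(x)$ with $\rho_3=1-\delta\alpha/(4\kappa^2)\in[0,1)$.

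There is no serious obstacle here: the argument is essentially a classical Fej\'er monotonicity computation plus one application of the regularity hypothesis. The only subtle point is that the residual term $\alpha(1-\alpha)\|F(x)-x\|^2$ must remain strictly positive, which is exactly why the stepsize range is restricted to $\alpha\leq 1-\delta$ rather than allowing $\alpha=1$; at the endpoint $\alpha=1$ the operator $F_\alpha=F$ is only QNE, not strongly QNE, and no contraction of the distance to $Fix(F)$ can be guaranteed from QNE plus linear regularity alone. The factor $1/4$ in $\rho_3$ (rather than the sharper $1/2$ from $\sqrt{1-t}\leq 1-t/2$) is cosmetic and chosen for cleaner downstream bookkeeping.
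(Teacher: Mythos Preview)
Your proof is correct and is essentially the same computation as the paper's. The only difference is packaging: the paper first observes that $F_\alpha$ is $\frac{1-\alpha}{\alpha}$-SQNE and linearly regular with constant $\kappa/\alpha$, then outsources the contraction estimate to Theorem~1 of \cite{banjac2018tight} to obtain $d_{Fix(F)}(F_\alpha(x))\leq\sqrt{1-\alpha(1-\alpha)/\kappa^2}\,d_{Fix(F)}(x)$, whereas you derive this same bound directly from the convex-combination identity and the QNE inequality---which is precisely how that cited theorem is proved. Both routes arrive at the identical factor $\sqrt{1-\alpha(1-\alpha)/\kappa^2}$ before the algebraic relaxation to $\rho_3$; your version is simply self-contained.
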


\begin{proof}
It is easy to see that $F_\alpha-Id=\alpha(F-Id)$, which together with (\ref{FA}) yields that $d_{Fix(F)}(x)\leq \kappa\|F(x)-x\|=\frac{\kappa}{\alpha}\|F_\alpha(x)-x\|$ for all $x\in\mathbb{H}$. Therefore, $F_\alpha$ is linearly regular with constant $\frac{\kappa}{\alpha}$. Simultaneously, it is known that each $\alpha$-quasi-averaged operator is $\frac{1-\alpha}{\alpha}$-SQNE \cite{banjac2018tight}, and thus $F_\alpha$ is $\frac{1-\alpha}{\alpha}$-SQNE. With the above two properties of $F_\alpha$ as well as $Fix(F_\alpha)=Fix(F)$, invoking Theorem 1 in \cite{banjac2018tight} leads to $d_{Fix(F)}(F_\alpha(x))\leq \phi d_{Fix(F)}(x)$, where $\phi:=\sqrt{1-\frac{\alpha(1-\alpha)}{\kappa^2}}\in[0,1)$. Meanwhile, it is easy to verify that
\begin{align}
\phi\leq 1-\frac{\alpha(1-\alpha)}{2\kappa^2}\leq 1-\frac{\delta\alpha}{4\kappa^2},                  \nonumber
\end{align}
where $\alpha\leq 1-\delta$ is used in the last inequality. This ends the proof.
\end{proof}

We are now ready to give the main result of this section.
\begin{theorem}\label{t1}
Under Assumptions \ref{a2}-\ref{a1}, all $x_{i,k}$'s generated by Algorithm 1 converge to a common point in $Fix(F)$ at a linear rate, if there holds
\begin{align}
0<\alpha<\min\{1-\delta,\alpha_c\},                \label{cdc13}
\end{align}
where $\alpha_c$ is the smallest positive real root of equation $det(I-M(\alpha))=0$, and
\begin{align}
M(\alpha):=\left(
             \begin{array}{ccc}
               (1-\alpha)\rho_1 & \alpha c_2\theta_1 & 0 \\
               \theta_2(\alpha\theta_3+\theta_4) & \rho_2+\alpha\theta_1\theta_2 & 2\alpha c_1\theta_2 \\
               \frac{\alpha \bar{L}}{c_1} & \frac{\alpha\sqrt{N}\theta_1}{c_1} & 1-\frac{\delta\alpha}{4\kappa^2} \\
             \end{array}
           \right)                 \label{cdc14}
\end{align}
with $\bar{L}:=\max_{i\in[N]}\{L_i\}$, $\theta_1:=\frac{c_4\|D_\nu^{-1}\|}{N}$, $\theta_2:=\frac{c_2\bar{L}(\sqrt{N}+1)}{c_1c_3}$, $\theta_3:=\rho_1+\bar{L}$, and $\theta_4:=\|A-I\|$.
\end{theorem}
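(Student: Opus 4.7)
The plan is to set up a coupled $3\times 3$ linear recursion on three error measures: (i) the consensus error $u_k := \|x_k - \mathbf{A}_\infty x_k\|_\pi$; (ii) the operator-tracking error $v_k := \|y_k - \mathbf{B}_\infty y_k\|_\nu$, measuring how well $y_{i,k}/(N\nu_i)$ approximates the global operator average at each agent; and (iii) the optimality error $w_k := d_{Fix(F)}(\bar{x}_k)$, where $\bar{x}_k := (\pi^\top \otimes Id)x_k$ is the weighted consensus point. The goal is to realize $M(\alpha)$ as the coefficient matrix in a componentwise inequality $(u_{k+1},v_{k+1},w_{k+1})^\top \leq M(\alpha)(u_k,v_k,w_k)^\top$, and then show $\rho(M(\alpha))<1$ for the stated range of $\alpha$, from which linear convergence of all three errors follows by the standard comparison theorem for nonnegative matrix recursions.

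For the first row, I would substitute the compact update (\ref{cf1}) into $x_{k+1}-\mathbf{A}_\infty x_{k+1}$, exploit $\mathbf{A}_\infty\mathbf{A}=\mathbf{A}_\infty$, and observe that $(I_N\otimes Id-\mathbf{A}_\infty)(D_\nu^{-1}\otimes Id)\mathbf{B}_\infty y_k = 0$, since $\mathbf{B}_\infty y_k$ makes that vector equal to a consensus vector $\mathbf{1}\otimes\bar{y}_k$, which is annihilated by $I_N\otimes Id-\mathbf{A}_\infty$; then Lemma \ref{l3} and the norm-equivalences (\ref{eq1})--(\ref{eq2}), together with $\|D_\nu^{-1}\otimes Id\|\leq\|D_\nu^{-1}\|$, deliver $(1-\alpha)\rho_1 u_k + \alpha c_2\theta_1 v_k$. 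For the second row, I would plug (\ref{cf2}) into $y_{k+1}-\mathbf{B}_\infty y_{k+1}$ to get the $\rho_2$-contraction from Lemma \ref{l3}, and bound $\|\mathbf{F}(x_{k+1})-\mathbf{F}(x_k)\|$ via Assumption \ref{a0}, splitting $\|x_{k+1}-x_k\|$ into an $\|A-I\|$-piece (contributing $\theta_4$) and an $\alpha$-scaled piece via the $D_\nu^{-1}y_k$ term (contributing $\alpha\theta_3$ after using $y_k = \mathbf{B}_\infty y_k + (y_k-\mathbf{B}_\infty y_k)$ and Lemma \ref{l6}), yielding the three stated entries. For the third row, I would left-multiply (\ref{cf1}) by $\pi^\top\otimes Id$ (using $\pi^\top A=\pi^\top$) to obtain $\bar{x}_{k+1}=(1-\alpha)\bar{x}_k+(\alpha/N)\sum_i(\pi_i/\nu_i)y_{i,k}$; using Lemma \ref{l6} and Lipschitzness to replace each $F_i(x_{i,k})$ by $F_i(\bar{x}_k)$ at cost $\bar{L}u_k$, rewrite this as $F_\alpha(\bar{x}_k)$ plus residuals attributable to consensus and tracking errors; Lemma \ref{l1} then supplies the diagonal $1-\delta\alpha/(4\kappa^2)$ and the residuals supply $\alpha\bar{L}/c_1$ and $\alpha\sqrt{N}\theta_1/c_1$.

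The principal obstacle is verifying $\rho(M(\alpha))<1$ throughout $(0,\min\{1-\delta,\alpha_c\})$. At $\alpha=0$, $M(0)$ is block lower-triangular with eigenvalues $\{\rho_1,\rho_2,1\}$; since $\rho_1,\rho_2<1$ by Lemma \ref{l2}, the eigenvalue $1$ is algebraically simple. Applying Lemma \ref{l-m} with the corresponding (third-coordinate-supported) left and right eigenvectors of $M(0)$, I expect the derivative $\left.d\lambda(\alpha)/d\alpha\right|_{\alpha=0} = -\delta/(4\kappa^2)<0$, so the Perron branch exits the unit disk strictly inward for small $\alpha>0$, while the other two eigenvalues depend continuously on $\alpha$ and remain strictly below $1$ in a neighborhood of $0$. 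The first $\alpha>0$ at which the Perron eigenvalue returns to $1$ is therefore characterized by $\det(I-M(\alpha))=0$, which is exactly the definition of $\alpha_c$. Combined with $\alpha\leq 1-\delta$ (required by Lemma \ref{l1}), this yields $\rho(M(\alpha))<1$ on the stated interval, and hence linear decay of $u_k,v_k,w_k$. Consensus $u_k\to 0$ and optimality $w_k\to 0$ then force each $x_{i,k}$ to $Fix(F)$ linearly; moreover the increments $\|\bar{x}_{k+1}-\bar{x}_k\|$ are linearly summable (extractable from the third-row recursion and the decay of $w_k$), so $\bar{x}_k$ is Cauchy and all agents converge to a common limit which must lie in $Fix(F)$ since this set is closed (Corollary 4.24 of \cite{bauschke2017convex}).
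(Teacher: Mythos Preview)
Your proposal is correct and follows essentially the same route as the paper's proof: the same three error quantities, the same row-by-row derivation via Lemma~\ref{l3}, Lemma~\ref{l6}, and Lemma~\ref{l1}, and the same eigenvalue-perturbation argument via Lemma~\ref{l-m}. Two small points are worth sharpening. First, the paper's third error is $\|\mathbf{A}_\infty x_k-{\bf 1}_N\otimes x_k^*\|=\sqrt{N}\,d_{Fix(F)}(\bar{x}_k)$, not $d_{Fix(F)}(\bar{x}_k)$; your choice differs from the paper's by a diagonal similarity, so the eigenvalues are unchanged, but to reproduce the stated $M(\alpha)$ verbatim you need the $\sqrt{N}$ scaling. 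Second, to conclude that the \emph{first} crossing of $\rho(M(\alpha))=1$ occurs precisely when $\det(I-M(\alpha))=0$, you must rule out complex eigenvalues reaching the unit circle first; the paper does this by observing that $M(\alpha)$ is irreducible (its associated digraph is strongly connected for $\alpha>0$) with positive diagonal, hence primitive by Lemma~\ref{l4}, so by Lemma~\ref{l5} the spectral radius is a simple real eigenvalue strictly dominating all others in modulus---you should make this explicit rather than relying on the phrase ``Perron branch.'' Your closing Cauchy argument for convergence to a \emph{single} common limit is a welcome addition that the paper's proof leaves implicit.
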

\begin{proof}
The proof can be found in Appendix B.
\end{proof}

\begin{remark}\label{rm2}
It should be noticed that the problem considered in this paper is more general than the common fixed point finding problem in \cite{fullmer2016asynchronous,fullmer2018distributed,liu2017distributed,li2019distributed3}, where all local operators are assumed to have at least one common fixed point, while this is dropped in this paper. It is worthwhile to notice the linear algebraic equation solving problem in \cite{mou2015distributed,wang2019distributed,alaviani2018distributed} can be cast as a special case of the common fixed point seeking problem. Note that no convergence speeds are provided in \cite{fullmer2016asynchronous,fullmer2018distributed,liu2017distributed,alaviani2019distributed}, although random interconnection graphs are considered in \cite{alaviani2019distributed}. In addition, the same problem as here is also studied in \cite{li2019distributed5} for nonexpansive operators, where the convergence rate is not analyzed, while a linear convergence rate is established here and more general operators are considered, i.e., quasi-nonexpansive operators. It should be also noted that a main difference between DOT here and D-KM in \cite{li2019distributed5} is that DOT exploits a tracking technique for $F$ with a constant stepsize, similar to the tracking idea for a global gradient in distributed optimization \cite{zhang2019decentralized,xin2019distributed,li2018distributedon}, while D-KM does not use this idea and applies a diminishing stepsize.
\end{remark}

As a special case, when all local operators have at least one common fixed point, problem (\ref{6}) will reduce to the common fixed point seeking problem due to $Fix(F)=\cap_{i=1}^N Fix(F_i)$ in this case (e.g., Proposition 4.47 in \cite{bauschke2017convex}). Therefore, we have the following result.
\begin{corollary}\label{c1}
Under the same conditions in Theorem \ref{t1}, if all $F_i$'s have at least one common fixed point, then all $x_{i,k}$'s generated by Algorithm 1 converge to a common point in $\cap_{i=1}^N Fix(F_i)$ at a linear rate.
\end{corollary}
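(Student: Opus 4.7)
The plan is to reduce Corollary \ref{c1} directly to Theorem \ref{t1}. The key observation is that, under the additional assumption that all $F_i$'s share a common fixed point, the fixed point set of the averaged global operator satisfies $Fix(F) = \bigcap_{i=1}^N Fix(F_i)$. Once this set identification is in place, Theorem \ref{t1} guarantees linear convergence of every $x_{i,k}$ to a common limit in $Fix(F)$, which is then automatically a point of $\bigcap_{i=1}^N Fix(F_i)$, exactly what the corollary claims.

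The main step is therefore to verify $Fix(F) = \bigcap_{i=1}^N Fix(F_i)$. The inclusion $\bigcap_{i=1}^N Fix(F_i) \subseteq Fix(F)$ is immediate: if $p$ is a common fixed point, then $F(p) = \frac{1}{N}\sum_{i=1}^N F_i(p) = p$. For the nontrivial inclusion, I would follow the argument of Proposition 4.47 in \cite{bauschke2017convex}: fix any $p \in \bigcap_{i=1}^N Fix(F_i)$ (nonempty by hypothesis) and any $x \in Fix(F)$, and combine Jensen's inequality for the squared norm with quasi-nonexpansiveness of each $F_i$ at $p$ to get
\[
\|x-p\|^2 = \Big\|\tfrac{1}{N}\sum_{i=1}^N (F_i(x)-p)\Big\|^2 \leq \tfrac{1}{N}\sum_{i=1}^N \|F_i(x)-p\|^2 \leq \|x-p\|^2.
\]
Both inequalities must therefore be tight; strict convexity of $\|\cdot\|^2$ forces $F_i(x)-p$ to be independent of $i$, and averaging back gives $F_i(x)=x$ for every $i$, so that $x \in \bigcap_{i=1}^N Fix(F_i)$.

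With the set identification established, I would simply invoke Theorem \ref{t1} under the stepsize condition (\ref{cdc13}) to conclude that all $x_{i,k}$ converge linearly to a common limit in $Fix(F) = \bigcap_{i=1}^N Fix(F_i)$. The only subtle point is that the Jensen argument uses quasi-nonexpansiveness of each individual $F_i$; this is the standard hypothesis in the common fixed point seeking framework cited by the corollary (see \cite{fullmer2016asynchronous,fullmer2018distributed,liu2017distributed,li2019distributed3}) and is implicit when invoking Proposition 4.47 of \cite{bauschke2017convex}, so no additional machinery beyond Theorem \ref{t1} is required.
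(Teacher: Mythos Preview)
Your proposal is correct and follows essentially the same route as the paper: the paper states immediately before the corollary that the common fixed point case reduces to problem (\ref{6}) because $Fix(F)=\cap_{i=1}^N Fix(F_i)$ by Proposition 4.47 in \cite{bauschke2017convex}, and then invokes Theorem \ref{t1}. You do exactly this, additionally spelling out the Jensen-type argument behind Proposition 4.47 and correctly flagging that the set identity relies on quasi-nonexpansiveness of each individual $F_i$ (a hypothesis the paper also uses implicitly through its citation of that proposition).
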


Note that the convergence speed is also analyzed for the common fixed point seeking problem in \cite{li2019distributed3} (i.e., the DO algorithm), where the main difference between DOT and DO is that an estimate is introduced for each agent to track the global operator $F$ here, while each agent does not perform this tracking in the DO algorithm. However, the rate is sublinear and all operators are assumed to be nonexpansive in \cite{li2019distributed3}, while a linear rate is provided here in Corollary \ref{c1} and less conservative operators, i.e., quasi-nonexpansive operators, are considered. Note that time-varying communication graphs were considered with non-identical stepsizes for the DO algorithm in \cite{li2019distributed3}, while this paper is concerned with static communication graphs with an identical stepsize for all agents. Along this line, it is interesting to further address the case with non-identical stepsizes for different agents and time-varying communication graphs in future.

\section{The DOP Algorithm for Problem II}\label{s-II}

This section is concerned with solving problem (\ref{6b}). Without loss of generality, the vectors in $\mathcal{H}$ are viewed as row vectors in this section for convenient analysis.

For problem (\ref{6b}), each agent $i\in[N]$ can only privately access $\texttt{F}_i$ with its own vector $x_i$ for a whole vector $x=(x_1,\ldots,x_N)\in\mathcal{H}$ over a network of $N$ agents, where $x_i$ is privately known by agent $i$ itself, as commonly encountered in multi-player games, and so on. To handle this problem, each agent $i\in[N]$ maintains a vector $x_k^i=(x_{1,k}^i,\ldots,x_{N,k}^i)\in\mathcal{H}$ at time step $k\geq 0$ as an estimate of a fixed point of $F$, where $x_{j,k}^i$ is an estimate of $x_{j,k}$ (i.e., the vector of agent $j$ at time $k$) by agent $i$ at time $k$ with $x_{i,k}^i=x_{i,k}$. That is, each agent $i$ updates its own vector $x_{i,k}$ at time slot $k$ without access to the vectors of all other agents $j\neq i$, and thus each agent $i$ needs to estimate other agents' vectors $x_{j,k}$ denoted as $x_k^i$ at each time $k\geq 0$ over the communication graph $\mathcal{G}$ satisfying Assumption \ref{a2}.

Now, a distributed algorithm is proposed as in Algorithm 2, where $A=(a_{ij})\in\mathbb{R}^{N\times N}$ is the communication matrix introduced after Assumption \ref{a2}, which is only row-stochastic, and $x_{-i,k}^j:=(x_{1,k}^j,\ldots,x_{i-1,k}^j,x_{i+1,k}^j,\ldots,x_{N,k}^j)$ for all $i,j\in[N]$, i.e., $x_{-i,k}^j$ is the agent $j$'s estimate of all agents' vectors except the $i$-th agent. We recall that $\pi$ is the left stochastic eigenvector of $A$ associated with the eigenvalue $1$ as introduced in Assumption \ref{a2}.2. It should be noted that the $i$-th entry $\pi_i$ of $\pi$ can be evaluated by agent $i$ in finite time in a distributed fashion using the approach in \cite{charalambous2015distributed}. Thus, Algorithm 2 is distributed.

\begin{algorithm}
 \caption{\underline{D}istributed Quasi-Averaged \underline{O}perator \underline{P}laying (DOP)}
 \begin{algorithmic}[1]
  \STATE \textbf{Initialization:} Stepsize $\alpha$ in (\ref{P7}), communication matrix $A$, and local initial conditions $x_{0}^i\in\mathcal{H}$ for all $i\in[N]$.
  \STATE \textbf{Iterations:} Step $k\geq 0$: update for each $i\in[N]$:
\begin{subequations}
\begin{align}
&x_{i,k+1}=\sum_{j=1}^N a_{ij} x_{i,k}^j+\frac{\alpha}{\pi_i}\big(\texttt{F}_i(x_k^i)-\sum_{j=1}^N a_{ij} x_{i,k}^j\big),            \label{A2-1}\\
&x_{-i,k+1}^i=\sum_{j=1}^N a_{ij}x_{-i,k}^j.                \label{A2-2}
\end{align}                                          \label{A2}
\end{subequations}
 \end{algorithmic}
\end{algorithm}

To ease the upcoming analysis, let us define $x_k:=col(x_k^1,\ldots,x_k^N)\in\mathcal{H}^N$, $\hat{x}_{i,k}:=\sum_{j=1}^N a_{ij} x_{i,k}^j$, and $\bar{F}:=diag\{(\texttt{F}_1(x_k^1)-\hat{x}_{1,k})/\pi_1,\ldots,(\texttt{F}_N(x_k^N)-\hat{x}_{N,k})/\pi_N\}$. Then algorithm (\ref{A2}) can be written in a compact form
\begin{align}
x_{k+1}=Ax_k+\alpha\bar{F}.         \label{P1}
\end{align}

Multiplying $\pi^\top$ on both sides of (\ref{P1}) yields that
\begin{align}
\tilde{x}_{k+1}=\tilde{x}_k+\alpha\tilde{F},      \label{P2}
\end{align}
where $\tilde{x}_k=(\tilde{x}_{1,k},\ldots,\tilde{x}_{N,k}):=\sum_{i=1}^N\pi_i x_k^i$ and $\tilde{F}:=(\texttt{F}_1(x_k^1)-\hat{x}_{1,k},\ldots,\texttt{F}_N(x_k^N)-\hat{x}_{N,k})$.

To move forward, it is useful to recall the weighted norm $\|z\|_\pi:=\sqrt{\sum_{i=1}^N \pi_i\|z_i\|^2}$ for a vector $z=col(z_1,\ldots,z_N)\in\mathcal{H}^N$, as defined in (\ref{cdc3}). And let $\|\cdot\|$ be a norm in $\mathcal{H}^N$ defined by $\|z\|:=\sqrt{\sum_{i=1}^N \|z_i\|^2}$. Remember that the vectors in $\mathcal{H}$ are seen as row vectors in this section. Then similar to (\ref{cdc9-1}) in Lemma \ref{l3}, it is easy to obtain the following result.
\begin{lemma}\label{l7}
For all $z\in\mathcal{H}^N$, it holds that
\begin{align}
\|Az-A_\infty z\|_\pi &\leq \rho_1\|z-A_\infty z\|_\pi,        \label{P3}
\end{align}
where $A_\infty={\bf 1}_N\pi^\top$ as defined in the paragraph after Assumption \ref{a2} and $\rho_1:=\|A-A_\infty\|_\pi<1$.
\end{lemma}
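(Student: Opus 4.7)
The plan is to mirror the structure of the proof of (\ref{cdc9-1}) in Lemma \ref{l3}, adapted to the case in which $A$ acts directly on row-vector-valued data in $\mathcal{H}^N$ rather than through the Kronecker product $A\otimes Id$. The whole argument pivots on a single algebraic identity plus a norm domination step.

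First I would establish that $(A-A_\infty)A_\infty = 0$. Because $A$ is row-stochastic we have $A\mathbf{1}_N = \mathbf{1}_N$, hence $A A_\infty = A\mathbf{1}_N \pi^\top = \mathbf{1}_N \pi^\top = A_\infty$; and since $\pi^\top\mathbf{1}_N = 1$, $A_\infty A_\infty = \mathbf{1}_N(\pi^\top\mathbf{1}_N)\pi^\top = A_\infty$. Subtracting yields $(A-A_\infty)A_\infty = 0$. Consequently, for every $z\in\mathcal{H}^N$,
\begin{align}
Az - A_\infty z \;=\; (A-A_\infty)\,z \;=\; (A-A_\infty)(z - A_\infty z). \nonumber
\end{align}
This reduces the claim to showing $\|(A-A_\infty)u\|_\pi \le \rho_1\,\|u\|_\pi$ for arbitrary $u\in\mathcal{H}^N$, applied to $u = z - A_\infty z$.

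Second, I would show the general fact that for any $M\in\mathbb{R}^{N\times N}$ and any $u\in\mathcal{H}^N$, $\|Mu\|_\pi \le \|M\|_\pi\,\|u\|_\pi$, where on the right $\|M\|_\pi$ is the matrix norm on $\mathbb{R}^N$ induced by the weighted norm in (\ref{cdc4}). To do this, fix a Hilbert (orthonormal) basis $\{e_\lambda\}_{\lambda\in\Lambda}$ of $\mathcal{H}$ and write each component as $u_i = \sum_\lambda u_i^{(\lambda)} e_\lambda$. Parseval gives $\|u_i\|^2 = \sum_\lambda (u_i^{(\lambda)})^2$, so if we set $u^{(\lambda)} := (u_1^{(\lambda)},\ldots,u_N^{(\lambda)})^\top\in\mathbb{R}^N$ then
\begin{align}
\|u\|_\pi^2 \;=\; \sum_{i=1}^N \pi_i\|u_i\|^2 \;=\; \sum_{\lambda}\|u^{(\lambda)}\|_\pi^2. \nonumber
\end{align}
Since the $\lambda$-coordinate of $Mu$ is $(Mu^{(\lambda)})$, applying the $\mathbb{R}^N$ induced-norm inequality $\|Mu^{(\lambda)}\|_\pi \le \|M\|_\pi\,\|u^{(\lambda)}\|_\pi$ and summing in $\lambda$ yields $\|Mu\|_\pi \le \|M\|_\pi\,\|u\|_\pi$.

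Finally, combining the two ingredients with $M = A - A_\infty$ and using the definition $\rho_1 := \|A-A_\infty\|_\pi$ gives
\begin{align}
\|Az - A_\infty z\|_\pi \;=\; \|(A-A_\infty)(z-A_\infty z)\|_\pi \;\le\; \rho_1\,\|z - A_\infty z\|_\pi, \nonumber
\end{align}
which is (\ref{P3}). The only step that carries any subtlety is the norm-domination fact in the second paragraph; the rest is routine identities. In practice one can shorten this by simply invoking the same tensor-style identity that underlies (\ref{cdc9-1}) of Lemma \ref{l3}, since $A$ acting on row vectors in $\mathcal{H}^N$ is the transpose-side analogue of $\mathbf{A} = A\otimes Id$ acting on column vectors, and the $\pi$-weighted norm is preserved under this viewpoint.
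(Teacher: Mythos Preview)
Your proposal is correct and follows essentially the same strategy as the paper: the paper does not give a separate proof of Lemma~\ref{l7} but refers to the proof of (\ref{cdc9-1}) in Lemma~\ref{l3}, which proceeds exactly via the identity $(A-A_\infty)A_\infty=0$ followed by a norm-domination step. The only minor technical difference is that for the norm-domination step the paper's Appendix~A argues through the tensor identity $\|x\otimes y\|_\pi=\|x\|_\pi\|y\|$ to conclude $\|\mathbf{A}-\mathbf{A}_\infty\|_\pi=\|A-A_\infty\|_\pi$, whereas you establish $\|Mu\|_\pi\le\|M\|_\pi\|u\|_\pi$ via a Parseval-type coordinate decomposition; both are equivalent, and your version is arguably cleaner since it avoids any question about whether the supremum over simple tensors captures the full induced norm.
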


To ensure the linear convergence, Assumptions \ref{a0} and \ref{a1} are still imposed in this section, but $F_i$ in Assumption \ref{a0} is replaced with $\texttt{F}_i$, i.e., $\|\texttt{F}_i(x)-\texttt{F}_i(y)\|\leq L_i\|x-y\|,~\forall x,y\in\mathcal{H}$ for $i\in[N]$.

With the above preparations, we are now ready to give the main result of this section.
\begin{theorem}\label{t3}
Under Assumptions \ref{a2}-\ref{a1} with $F_i$ being replaced with $\texttt{F}_i$ in Assumption \ref{a0}, all $x_k^i$'s generated by DOP converge to a common point in $Fix(F)$ at a linear rate, if
\begin{align}
0<\alpha<\min\{1-\delta,\alpha_{L}\},         \label{P7}
\end{align}
where $\delta\in (0,1)$ is any pre-specified parameter, $\alpha_{L}$ is the smallest positive real root of equation $det(I-\Theta(\alpha))=0$, and
\begin{align}
\Theta(\alpha):=\left(
                  \begin{array}{cc}
                    \rho_1+\alpha \theta_5 & 2\alpha c_2\sqrt{2\varpi} \\
                    \frac{\alpha(\bar{L}+1)}{c_1} & 1-\frac{\delta\alpha}{4\kappa^2} \\
                  \end{array}
                \right)                         \label{P8}
\end{align}
with $\theta_5:=2c_2\sqrt{\varpi(\bar{L}^2+1)}/c_1$, $\bar{L}:=\max_{i\in[N]}\{L_i\}$, $\varpi:=N-1+\frac{(1-\underline{\pi})^2}{\underline{\pi}^2}$, and $\underline{\pi}:=\min_{i\in[N]}\{\pi_i\}>0$.
\end{theorem}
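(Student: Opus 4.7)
The plan is to mirror the proof of Theorem \ref{t1} but adapted to the block-separable setting. I would track two scalar error quantities that capture, respectively, how far apart the agents' estimates are and how close the weighted average is to $Fix(F)$:
\[
U_k := \|x_k - A_\infty x_k\|_\pi, \qquad V_k := d_{Fix(F)}(\tilde{x}_k),
\]
and then derive a coupled matrix recursion $(U_{k+1}, V_{k+1})^\top \leq \Theta(\alpha)(U_k, V_k)^\top$ with $\rho(\Theta(\alpha)) < 1$ for sufficiently small $\alpha$. Once this is established, linear convergence of both $U_k$ and $V_k$ to $0$ forces each $x_k^i$ to converge linearly to a common point in $Fix(F)$, which is closed by Assumption \ref{a1}.1.

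For the consensus recursion, I would start from (\ref{P1}) and use $A_\infty A = A_\infty$ to get $x_{k+1} - A_\infty x_{k+1} = (A - A_\infty) x_k + \alpha(I - A_\infty)\bar{F}$. Lemma \ref{l7} controls the first piece by $\rho_1 U_k$. For the second, the key observation is that $\bar{F}_i$ is supported only on the $i$-th block $\mathcal{H}_i$ of $\mathcal{H} = \mathcal{H}_1\oplus\cdots\oplus\mathcal{H}_N$, so direct expansion of $(I - A_\infty)\bar{F}$ block by block produces a combination of factors $(1-\pi_i)$ on the diagonal and $\pi_j$ off-diagonal, which, after using $\pi_j \ge \underline{\pi}$, collapse into the constant $\varpi = N-1 + (1-\underline{\pi})^2/\underline{\pi}^2$. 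I would then bound $\|\texttt{F}_i(x_k^i) - \hat{x}_{i,k}\|$ by introducing the reference $p^* := P_{Fix(F)}(\tilde{x}_k)$, so that $\texttt{F}_i(p^*) = p^*_i$, and splitting $\texttt{F}_i(x_k^i)-\hat{x}_{i,k} = [\texttt{F}_i(x_k^i)-\texttt{F}_i(p^*)] + [p^*_i - \hat{x}_{i,k}]$; Lipschitz continuity of $\texttt{F}_i$ plus $\|x_k^j - p^*\| \le \|x_k^j - \tilde{x}_k\| + V_k$ and the norm equivalences (\ref{eq1})--(\ref{eq2}) yield the first row of $\Theta(\alpha)$.

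For the fixed-point recursion, I would rewrite (\ref{P2}) as $\tilde{x}_{k+1} = F_\alpha(\tilde{x}_k) + \alpha e_k$, where $F_\alpha = Id + \alpha(F-Id)$ and $e_k := \tilde{F} - (F(\tilde{x}_k) - \tilde{x}_k)$. Lemma \ref{l1} immediately delivers $d_{Fix(F)}(F_\alpha(\tilde{x}_k)) \le (1 - \delta\alpha/(4\kappa^2))V_k$, giving the $(2,2)$ entry of $\Theta(\alpha)$. The $i$-th block of $e_k$ is $[\texttt{F}_i(x_k^i)-\texttt{F}_i(\tilde{x}_k)] + [\tilde{x}_{i,k}-\hat{x}_{i,k}]$; the first summand is bounded in terms of $U_k$ via Lipschitz continuity, and the second via the centered decomposition $\tilde{x}_{i,k}-\hat{x}_{i,k} = \sum_j(\pi_j - a_{ij})(x_{i,k}^j-\tilde{x}_{i,k})$ combined with a Cauchy--Schwarz estimate. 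Collecting these bounds and passing from $\|\cdot\|$ to $\|\cdot\|_\pi$ through $c_1$ produces the second row of $\Theta(\alpha)$.

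The spectral argument for $\rho(\Theta(\alpha)) < 1$ is the same as in Theorem \ref{t1}: at $\alpha = 0$, $\Theta(0) = \mathrm{diag}(\rho_1,1)$ with $1$ a simple eigenvalue having left/right eigenvectors $e_2$; by Lemma \ref{l-m}, the eigenvalue $\lambda(\alpha)$ branching from $1$ is differentiable at $0$ with derivative $-\delta/(4\kappa^2)<0$, so it dips below $1$ for all $\alpha$ small enough. Combined with continuity, there is a smallest positive $\alpha_L$ at which an eigenvalue returns to $1$, characterized precisely by $\det(I - \Theta(\alpha_L)) = 0$; for $\alpha \in (0, \alpha_L)$ the recursion contracts linearly. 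Finally, $\|\tilde{x}_{k+1}-\tilde{x}_k\| = \alpha\|\tilde F\| \le C(U_k+V_k)$ is summable, so $\{\tilde{x}_k\}$ is Cauchy and its limit lies in $Fix(F)$ since $d_{Fix(F)}(\tilde x_k) = V_k \to 0$ and $Fix(F)$ is closed; combined with $U_k \to 0$ this forces all $x_k^i$ to converge to the same fixed point. The main obstacle is the bookkeeping in Step 1: tracking the asymmetric block structure of $\bar{F}$ through $I - A_\infty$ so that the coefficient collapses exactly to $2c_2\sqrt{2\varpi}$, and ensuring the norm-equivalence constants $c_1, c_2$ and the Lipschitz constants combine cleanly into $\theta_5$ and the $(1,2)$ entry of (\ref{P8}).
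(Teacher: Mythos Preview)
Your proposal is essentially the same approach as the paper's: track $U_k=\|x_k-A_\infty x_k\|_\pi$ and $V_k=d_{Fix(F)}(\tilde x_k)$, derive $z_{k+1}\le\Theta(\alpha)z_k$, and reuse the spectral/perturbation argument from Theorem~\ref{t1}. Two minor points of divergence are worth flagging. First, the block structure of $\bar F-A_\infty\bar F$ carries $(1/\pi_i-1)e_{i,k}$ on the diagonal and $-e_{j,k}$ off-diagonal (not $(1-\pi_i)$ and $\pi_j$ as you wrote); this is exactly what gives $\varpi=N-1+(1-\underline\pi)^2/\underline\pi^2$. Second, to recover the \emph{stated} entries of $\Theta(\alpha)$ the paper splits $e_{i,k}$ into four pieces by inserting both $\tilde x_k$ and $x^*=P_{Fix(F)}(\tilde x_k)$, namely $[\texttt F_i(x_k^i)-\texttt F_i(\tilde x_k)]+[\texttt F_i(\tilde x_k)-x_i^*]+[x_i^*-\tilde x_{i,k}]+[\tilde x_{i,k}-\hat x_{i,k}]$, and then invokes the quasi-nonexpansiveness of $F$ to bound $\sum_i\|\texttt F_i(\tilde x_k)-x_i^*\|^2=\|F(\tilde x_k)-x^*\|^2\le\|\tilde x_k-x^*\|^2$; your two-term split via $p^*$ would still yield a contracting recursion but with slightly different (looser) constants in the first row. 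Your closing Cauchy argument for $\{\tilde x_k\}$ is a clean way to pin down the common limit point, which the paper leaves implicit.
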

\begin{proof}
The proof can be found in Appendix C.
\end{proof}

\begin{remark}\label{rm5}
It is worth pointing out that the work \cite{li2019distributed5} only considers the sum separable case and does not present the convergence speed. In contrast, this paper addresses both the sum separable case (see Theorem \ref{t1}) and the block separable case (see Theorem \ref{t3}), and to our best knowledge, this paper is the first to address the block separable case for the fixed point finding problem in the decentralized fashion. Note that the block separable case in Theorem \ref{t3} has many applications as will be discussed in Section \ref{app}.
\end{remark}

\begin{remark}
Moreover, it is worth noting that Problem II can be cast as the common fixed point finding problem for a family of operators $T_i:=(Id_1,\ldots,Id_{i-1},\texttt{F}_i,Id_{i+1},\ldots,Id_N):\mathcal{H}\to \mathcal{H}$, where $Id_i:x\mapsto x_i$ for $x=(x_1,\ldots,x_N)\in\mathcal{H}$ and $i\in[N]$. However, there exist two issues: 1) the linear regularity condition may not hold for $\sum_{i=1}^N T_i$; and 2) although the DO algorithm proposed in \cite{li2019distributed3} is applicable for finding common fixed points, only a sublinear convergence speed is established.
\end{remark}

\begin{remark}\label{RM1}
It is noteworthy that in problem II each agent $i$ can only know its own vector $x_{i,k}$ at each time $k\geq 0$, but has no access to vectors $x_{j,k}$'s of all other agents for $j\neq i$. In this regard, agent $i$ needs to estimate all other $x_{j,k}$'s in order to compute the value of its operator $\texttt{F}_i$. If each agent has full access to all other agents' vectors, then a simpler algorithm can be devised to tackle this setup, i.e.,
\begin{align}
x_{i,k+1}=x_{i,k}+\alpha(\texttt{F}_i(x_k)-x_{i,k}),          \label{P16}
\end{align}
where $x_{i,k}$ is the same as in (\ref{A2}) and $x_k:=(x_{1,k},\ldots,x_{N,k})$. However, there is no need for each agent to estimate the entire vector $x_k$ in this setup. As for (\ref{P16}), a linear convergence to a fixed point of the global operator $F$ can be similarly proved to that of Theorem \ref{t3}.
\end{remark}

\section{Applications of DOT and DOP}\label{app}\label{sec-app}

The considered problems can provide a unified framework for a multitude of interesting problems. To show this, this section aims to provide two examples, i.e., distributed optimization and multi-player games under partial decision-information.

\subsection{Distributed Optimization}\label{app-1}

Consider a global optimization problem
\begin{align}
\min_{x\in\mathcal{H}}~~~f(x)             \label{P9}
\end{align}
where $f:\mathcal{H}\to\mathbb{R}$ is a differentiable and convex function, whose gradient is Lipschitz with constant $L$. It is easy to verify that problem is equivalent to finding fixed points of an operator $F:x\mapsto x-\xi\nabla f(x)$ for any given $\xi>0$, which is shown to be $(L\xi)/2$-averaged when $\xi\in(0,2/L)$ (cf., Lemma 4 in \cite{liang2016convergence}) and thus nonexpansive. For large-scale problems, the function $f$ is usually expensive or impossible to be known by a global/central coordinator or computing unit, instead it is more practical to consider the case where $f$ is separable. Along this line, two cases are discussed below.

{\em Case 1.} $f$ is sum separable, i.e., $f(x)=\frac{1}{N}\sum_{i=1}^N f_i(x)$, where $f_i:\mathcal{H}\to\mathbb{R}$ is the local function, which is differentiable and convex with $\ell_i$-Lipschitz gradient, only known to agent $i$. This problem is often called {\em distributed/decentralizd optimization}, which has been extensively studied in the literature. In this case, the problem can be equivalently cast as problem I (i.e., (\ref{6})) with $F_i: x\mapsto x-\xi\nabla f_i(x)$ for any given bounded $\xi>0$, which is Lipschitz. Therefore, Assumption \ref{a0} holds true. In this setup, DOT proposed in this paper can be leveraged to solve problem (\ref{P9}) in the sum separable case. As such, under Assumptions \ref{a2}-\ref{a1}, the linear convergence to a solution of (\ref{P9}) can be guaranteed by Theorem \ref{t1}.

{\em Case 2.} $f$ is block separable, that is, $\nabla f(x)=col(\nabla_{x_1}f(x),\ldots,\nabla_{x_N}f(x))$ with $x=col(x_1,\ldots,x_N)$, where $x_i$ is the vector of agent $i\in[N]$ and each agent $i$ is only capable of computing partial gradient $\nabla_{x_i}f(x)$ with respect to its own vector $x_i$. This scenario is realistic in some cases, partially because it is computationally expensive to compute the whole gradient $\nabla_x f(x)$ by a global/central coordinator or computing unit, and partially because only part of data $x_i$ may be privately acquired by spatially distributed agents. In this setup, the problem can be recast as problem (\ref{6b}) with $\texttt{F}_i: x\mapsto x_i-\xi\nabla_{x_i} f(x)$ for any given bounded $\xi>0$, which is Lipschitz if $\nabla_{x_i} f(x)$ is so. For this problem, under Assumptions \ref{a2}-\ref{a1}, the linear convergence to a solution of (\ref{P9}) can be ensured by Theorem \ref{t3}.

\begin{remark}\label{rm6}
Note that in Case 1, the linear convergence rate is ensured under the linear regularity in Assumption \ref{a1}, which is strictly weaker than the strong convexity of $f_i$'s or $f$ \cite{banjac2018tight}, which is widely postulated in distributed optimization \cite{jakovetic2014linear,nedic2017achieving,xu2018convergence,xin2019distributed1,pu2020push,li2020distributed}, to just name a few. Also, notice that the linear regularity is only assumed for $F$, not necessary for any local operator $F_i$. For Case 1, a similar condition to linear regularity, i.e., metric subregularity, is employed in \cite{liang2019exponential} for deriving a linear convergence, which is however in Euclidean spaces under balanced undirected communication graphs, while the result here works in a more general setting, i.e., in real Hilbert spaces under unbalanced directed graphs. It should be also noted that the aforesaid problem is just an application of a general problem (\ref{6}) addressed here. In addition, to our best knowledge, this paper is the first to investigated the Case 2 in distributed optimization.
\end{remark}

\subsection{Game Under Partial-Decision Information}\label{app-3}

Consider a noncooperative $N$-player game with unconstrained action sets, where each player can be viewed as an agent and a Nash equilibrium is assumed to exist for the game. In this problem, each player $i\in[N]$ possesses its own cost (or payoff) function $J_i(x_i,x_{-i})$, which is differentiable, where $x_i$ is the decision/action vector of player $i$ and $x_{-i}$ denotes the decision vectors of all other players, i.e., $x_{-i}:=col(x_{1,k},\ldots,x_{i-1,k},x_{i+1,k},\ldots,x_{N,k})$. Note that player $i$ cannot access other players' decision vectors, i.e., the considered game here is under partial-decision information, which is more practical than the case where each player has full access to all other players' decisions in most of existing works. For this problem, at time step $k\geq 0$, each player $i\in[N]$ will choose its own decision vector $x_{i,k}\in\mathbb{R}^{n_i}$ and a cost $J_i(x_{i,k},x_{-i,k})$ will be incurred for player $i$ after all players make their decisions. Then the objective is for each player to minimize its own cost function, that is, all players desire to achieve a {\em Nash equilibrium (NE)} $x^*=col(x_1^*,\ldots,x_N^*)\in\mathbb{R}^n$ with $n:=\sum_{i=1}^N n_i$, which is defined as: for all $i\in[N]$,
\begin{align}
J_i(x_i^*,x_{-i}^*)\leq J_i(x_i,x_{-i}^*),~~~\forall x_i\in\mathbb{R}^{n_i}.             \label{P12}
\end{align}

To proceed, let $\nabla_i J_i(x_i,x_{-i})$ denote $\nabla_{x_i}J_i(x_i,x_{-i})$ for simplicity. It is then easy to see that an NE $x^*$ satisfies $\nabla_i J_i(x_i^*,x_{-i}^*)=0$ for all $i\in[N]$. Consequently, the Nash equilibrium seeking can be equivalently recast as to find fixed points of an operator $F$, defined by
\begin{align}
F&:=Id-r U,                                \label{P13}\\
U&:=col(\nabla_1 J_1,\ldots,\nabla_N J_N),          \label{P14}
\end{align}
where $r>0$ is any constant. By defining $\texttt{F}_i:=Id_i-r \nabla_i J_i$ for $i\in[N]$ with $Id_i:x\mapsto x_i$ for $x=col(x_1,\ldots,x_N)\in\mathbb{R}^n$, one can obtain that $F$ is block separable, i.e., $F=col(\texttt{F}_1,\ldots,\texttt{F}_N)$, which is consistent with problem (\ref{6b}). As a result, the linear convergence to an NE of the game can be assured by Theorem \ref{t3} under Assumptions \ref{a2}-\ref{a1} with $F$ being quasi-nonexpansive.

Note that Assumptions \ref{a2}-\ref{a1} are relativley mild, some of which are less conservative than those employed in the literature, as remarked below.

1) Assumption \ref{a0} is in fact equivalent to $\nabla_i J_i$ being Lipschitz for all $i\in[N]$, i.e., $\|\nabla_i J_i(x)-\nabla_i J_i(y)\|\leq q_i\|x-y\|$ for some $q_i>0$ and for all $x,y\in\mathbb{R}^n$, which has been frequently employed in the literature, see e.g., \cite{tatarenko2019geometric,bianchi2019distributed,bianchi2020fully,meng2020linear}. Then it can be readily obtained that $U$ is $q$-Lipschitz, where $q:=\sqrt{\sum_{i=1}^N q_i^2}$.

2) The linear regularity is strictly weaker than the strong monotonicity, which has been widely imposed for deriving the linear convergence \cite{tatarenko2019geometric,bianchi2019distributed,bianchi2020fully,meng2020linear}, i.e., $(U(x)-U(z))^\top(x-z)\geq \mu\|x-z\|^2$ for some $\mu>0$ and for all $x,z\in\mathbb{R}^n$. To see this, it is obvious that strong monotonicity is strictly stronger than quasi-strong monotonicity, i.e., $(U(x)-U(y))^\top(x-y)\geq \mu\|x-y\|^2$ for all $x\in\mathbb{R}^n$ and $y\in Fix(F)$. Meanwhile, quasi-strong monotonicity can imply the linear regularity of $F$, since it holds that $\|F(x)-x\|=r\|U(x)\|=r\|U(x)-U(P_{Fix(F)}(x))\|\geq r\mu\|x-P_{Fix(F)}(x)\|=r\mu d_{Fix(F)}(x)$ for all $x\in\mathbb{R}^n$, i.e., $d_{Fix(F)}(x)\leq \|F_\alpha(x)-x\|/(r\mu)$, where $U(P_{Fix(F)}(x))=0$ and the quasi-strong monotonicity have been utilized. It should be also noted that the game can have a closed convex set of NEs (not necessary to be unique) under linear regularity.

3) The quasi-nonexpansiveness of $F$ is a weak assumption. For example, if the aforementioned quasi-strong monotonicity holds, then it holds that for all $x\in\mathbb{R}^n$ and $y\in Fix(F)$,
\begin{align}
\|F(x)-y\|^2&=\|x-y-r(U(x)-U(y))\|^2          \nonumber\\
&=\|x-y\|^2-2r(x-y)^\top(U(x)-U(y))           \nonumber\\
&\hspace{0.4cm}+r^2\|U(x)-U(y)\|^2        \nonumber\\
&\leq (1-2\mu r+q^2r^2)\|x-y\|^2,            \label{P15}
\end{align}
where the quasi-strong monotonicity and $q$-Lipschitz continuity of $U$ are used in the inequality. In view of (\ref{P15}), it is easy to see that $F$ is even contractive, which is stronger than quasi-nonexpansive, if $r\in(0,\frac{2\mu}{q^2})$.

4) Assumption \ref{a2} requires the strong connectivity for directed graphs, which are not necessarily balanced. In contrast, balanced undirected/directed graphs are exploited in \cite{tatarenko2019geometric,bianchi2019distributed,bianchi2020fully,meng2020linear}. We note that time-varying graphs are considered in \cite{bianchi2020fully}, but the graphs still need to be balanced, and in this case, it is interesting for us to extend the results of this paper to time-varying communication graphs.

\section{Numerical Examples}\label{s5}

This section is to provide two numerical examples to corroborate the proposed algorithms.
\begin{example}\label{e1}
Consider a distributed optimization problem as discussed in Case 1 of Section \ref{app-1}, where $f_i(x)=\hbar_i(Ex)+b_i^\top x$, and $\hbar_i(z)$ is a strongly convex function with Lipschitz continuous gradient. It is easy to see that this problem is equivalent to finding a fixed point of the operator $F:=Id-\xi \nabla f$ for $\xi\in(0,2/L)$ (see Section \ref{app-1}), which is in the form (\ref{6}) with $F_i:=Id_i-\xi\nabla f_i$ for $i\in[N]$.

It should be noted that $f_i$ will be strongly convex when $E$ has full column rank, and $f_i$ will be convex but not strongly convex if $E$ does not have full column rank (Section III in \cite{banjac2018tight}), which is frequently encountered in practical applications, such as the $L1$-loss linear support vector machine (SVM) in machine learning \cite{wang2014iteration}. Denote by $X^*$ the nonempty set of optimizers of this problem. Although $f_i$ is not strongly convex when $E$ does not has full column rank, it has been shown in Theorem 18 of \cite{wang2014iteration} that this problem admits a global error bound, i.e., $d_{X^*}(x)\leq \tau\|\nabla f(x)\|,\forall x\in\mathbb{R}^n$ for some constant $\tau\geq0$, which further leads to $d_{Fix(F)}(x)\leq\frac{\tau}{\xi}\|x-F(x)\|$ for all $x\in\mathbb{R}^n$, i.e., satisfying the linear regularity condition.

\begin{figure}[H]
\centering
\includegraphics[width=2.8in]{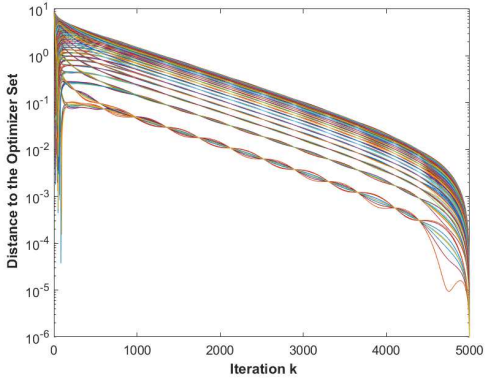}
\caption{Evolutions of distance to the optimizer set by DOT in this paper.}
\label{f1}
\end{figure}

\begin{figure}[H]
\centering
\includegraphics[width=2.8in]{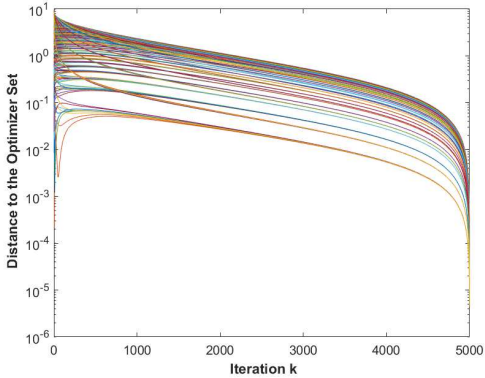}
\caption{Evolutions of distance to the optimizer set by D-KM in \cite{li2019distributed5}.}
\label{f2}
\end{figure}

In the simulation, let $N=100$, $n=5$, $\hbar_i(Ex)=|Ex-p_i|^2$, $E=(1,1,1,1,1)\in\mathbb{R}^{1\times 5}$, $p_i=i$, and $b_i=col(i,i,i,i,i)/5$ for all $i\in[N]$. Then it is easy to verify that the gradient constant of $f$ is $L=10$, thus holding $\xi\in (0,0.2)$. Setting $\alpha=0.05$ and $\xi=0.1$, running the DOT algorithm (\ref{9}) gives rise to the simulation results in Fig. \ref{f1}, indicating that all $x_{i,k}$'s converge linearly to the optimal set $X^*:=\{z=col(z_1,z_2)\in\mathbb{R}^2:z_1+2z_2=3(N+1)/8\approx 37.875\}$. In comparison with the D-KM iteration proposed in \cite{li2019distributed5} under the same communication graph as DOT (see Fig. \ref{f2}), which is equivalent to the classical distributed gradient descent (DGD) algorithm for this problem, it can be observed that the DOT algorithm here has a faster convergence speed. Overall, the simulation supports the theoretical result.
\end{example}

\begin{figure}[H]
\centering
\includegraphics[width=2.8in]{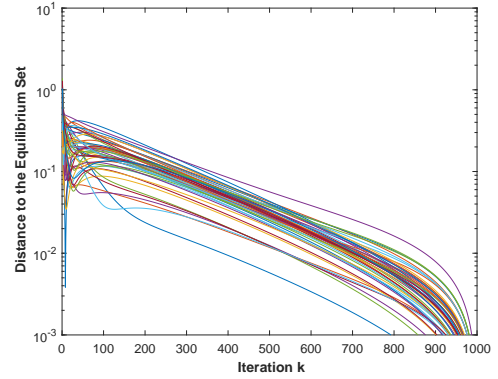}
\caption{Evolutions of $x_{j,k}^i-x_{j,k}^1$ for $i,j\in[N]$ by DOP.}
\label{f5}
\end{figure}

\begin{figure}[H]
\centering
\includegraphics[width=2.8in]{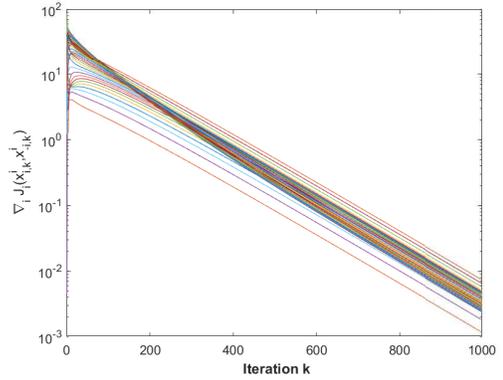}
\caption{Evolutions of $\nabla_i J_i(x_{i,k}^i,x_{-i,k}^i)$ for $i\in[N]$ by DOP.}
\label{f6}
\end{figure}

\begin{example}\label{e3}
Consider the class of games as discussed in Section \ref{app-3} with $N=50$ players. To be specific, each player $i$ has its decision vector in $\mathbb{R}^2$ with its cost function given as $J_i(x_i,x_{-i})=h_i(Ex_i)+l_i^\top x_i$, where $h_i(z)=r_i z^2+s_i z$ is strongly convex for $z\in\mathbb{R}$ with $r_i>0$ and $s_i\in\mathbb{R}$, and $l_i(x_{-i})=\sum_{j\neq i}c_{ij}x_j$ with $c_{ij}\in\mathbb{R}^{2\times 2}$. Note that $J_i$ is not strongly convex in $x_i$ if $E$ is not of full column rank, as discussed in Example \ref{e1}. In this example, let $E=(1,1)$, which does not have full column rank. Thus, $J_i$ is not strongly convex in $x_i$, however $\texttt{F}_i:=Id_i-r \nabla_i J_i$ is linearly regular as similarly illustrated as in Example \ref{e1}. It is then easy to verify that the global operator $F=(\texttt{F}_1,\ldots,\texttt{F}_N)$ is linearly regular. Moreover, the Nash equilibrium may not be unique since $E$ is not of full column rank. Set $\alpha=0.01$ and $r=0.1$. By randomly choosing $r_i$, $s_i$, and $c_{ij}$ for $i,j\in[N]$ with a randomly generated strongly connected communication graph, performing the developed DOP with each component of initial conditions randomly in $[0,1]$ gives the simulation results in Figs. \ref{f5} and \ref{f6}. In Fig. \ref{f5}, the distances from $x_k^i=col(x_{1,k}^i,\ldots,x_{N,k}^i)$ to the equilibrium set are plotted for all players, showing that all players' estimate $x_k^i$'s converge to the equilibrium set at a linear rate. On the other hand, the gradient $\nabla_iJ_i(x_{i,k}^i,x_{-i,k}^i)$ of each agent $i$ is given in Fig. \ref{f6}, indicating that all agents' gradients converge linearly to zero. Hence, the simulation results support the theoretical result in Theorem \ref{t3}.
\end{example}

\section{Conclusion}\label{s6}

This paper has investigated the fixed point seeking problem for a quasi-nonexpansive global operator over a fixed, unbalanced, and directed communication graph, for which two scenarios have been considered, i.e., the global operator is respectively sum separable and block separable under the linear regularity condition. For the first case, the global operator consists of a sum of local operators, which are assumed to be Lipschitz. To solve this case, a distributed algorithm, DOT, has been proposed and shown to be convergent to a fixed point of the global operator at a linear rate. For the second case, a distributed algorithm, DOP, has been developed, showing to be linearly convergent to a fixed point of the global operator. Meanwhile, two applications have been presented in detail, i.e., distributed optimization and multi-player game under partial-decision information. In future, it is interesting to study asynchronous algorithms, non-identical stepsizes for agents, and time-varying communication graphs.

\section*{Acknowledgment}

The authors are grateful to the Editor, the Associate Editor and the anonymous reviewers for their insightful suggestions.

\section*{Appendix}

%

\subsection{Proof of Lemma \ref{l3}}

To prove (\ref{cdc9-1}), it can be obtained that
\begin{align}
\|\mathbf{A}z-\mathbf{A}_\infty z\|_\pi&=\|(\mathbf{A}-\mathbf{A}_\infty)(z-\mathbf{A}_\infty z)\|_\pi         \nonumber\\
&\leq \|\mathbf{A}-\mathbf{A}_\infty\|_\pi\|z-\mathbf{A}_\infty z\|_\pi,                 \label{cdc10}
\end{align}
where the equality has used the fact that $\mathbf{A}\mathbf{A}_\infty=\mathbf{A}_\infty\mathbf{A}_\infty=\mathbf{A}_\infty$.

Consider the term $\|\mathbf{A}-\mathbf{A}_\infty\|_\pi$ in (\ref{cdc10}). To do so, by definition (\ref{cdc3}), one has that for any $x\in\mathbb{R}^N$ and $y\in\mathcal{H}$,
\begin{align}
\|x\otimes y\|_\pi&=\sqrt{\sum_{i=1}^N \pi_i x_i^2\|y\|^2}=\|y\|\sqrt{\sum_{i=1}^N \pi_i x_i^2}=\|x\|_\pi \|y\|,                                           \label{cdc11}
\end{align}
which, together with the norm's definition, leads to
\begin{align}
\|\mathbf{A}-\mathbf{A}_\infty\|_\pi&=\sup_{\|x\otimes y\|_\pi\neq 0}\frac{\|(\mathbf{A}-\mathbf{A}_\infty)(x\otimes y)\|_\pi}{\|x\otimes y\|_\pi}         \nonumber\\
&=\sup_{\|x\otimes y\|_\pi\neq 0}\frac{\|[(A-A_\infty)x]\otimes y\|_\pi}{\|x\otimes y\|_\pi}         \nonumber\\
&=\sup_{\|x\|_\pi\|y\|\neq 0}\frac{\|(A-A_\infty)x\|_\pi \|y\|}{\|x\|_\pi \|y\|}         \nonumber\\
&=\sup_{\|x\|_\pi\neq 0}\frac{\|(A-A_\infty)x\|_\pi}{\|x\|_\pi}         \nonumber\\
&=\|A-A_\infty\|_\pi.                                                \label{cdc12}
\end{align}
Note that $\|A-A_\infty\|_\pi=\rho_1$ by Lemma \ref{l2}. Consequently, putting together (\ref{cdc10})-(\ref{cdc12}) gives rise to (\ref{cdc9-1}). By noting that $\mathbf{B}\mathbf{B}_\infty=\mathbf{B}_\infty\mathbf{B}_\infty=\mathbf{B}_\infty$, similar arguments can be applied to obtain (\ref{cdc9-2}) and (\ref{cdc9-3}). This ends the proof.

\subsection{Proof of Theorem \ref{t1}}

Let us first establish upper bounds on $\|x_{k+1}-\mathbf{A}_\infty x_{k+1}\|_\pi$, $\|x_{k+1}-x_k\|$, $\|y_{k+1}-\mathbf{B}_\infty y_{k+1}\|_\nu$, and $\|\mathbf{A}_\infty x_{k+1}-{\bf 1}_N\otimes x_{k+1}^*)\|$, where $\bar{x}_k:=\sum_{i=1}^N \pi_i x_{i,k}$ and $x_k^*:=P_{Fix(F)}(\bar{x}_k)$ for all $k\geq 0$.

For $\|x_{k+1}-\mathbf{A}_\infty x_{k+1}\|_\pi$, by noting $\mathbf{A}_\infty \mathbf{A}=\mathbf{A}_\infty$, invoking (\ref{cf1}) yields that
\begin{align}
&\|x_{k+1}-\mathbf{A}_\infty x_{k+1}\|_\pi          \nonumber\\
&=\|(1-\alpha)\mathbf{A} x_k+\frac{\alpha}{N}(D_\nu^{-1}\otimes Id)y_k-(1-\alpha)\mathbf{A}_\infty \mathbf{A}x_k         \nonumber\\
&\hspace{0.4cm}-\frac{\alpha}{N}\mathbf{A}_\infty(D_\nu^{-1}\otimes Id)y_k\|_\pi                                    \nonumber\\
&\leq (1-\alpha)\|\mathbf{A}x_k-\mathbf{A}_\infty x_k\|_\pi                                                 \nonumber\\
&\hspace{0.4cm}+\frac{\alpha}{N}\|(I_N\otimes Id-\mathbf{A}_\infty)(D_\nu^{-1}\otimes Id)y_k\|_\pi.         \label{pf1}
\end{align}
Consider the last term in (\ref{pf1}). One can obtain that
\begin{align}
&\|(I_N\otimes Id-\mathbf{A}_\infty)(D_\nu^{-1}\otimes Id)y_k\|_\pi           \nonumber\\
&=\|(I_N\otimes Id-\mathbf{A}_\infty)[(D_\nu^{-1}\otimes Id)y_k-{\bf 1}_N\otimes \bar{y}_k]\|_\pi       \nonumber\\
&=\|(I_N\otimes Id-\mathbf{A}_\infty)(D_\nu^{-1}\otimes Id)[y_k-\nu\otimes \bar{y}_k]\|_\pi       \nonumber\\
&\leq \|I_N\otimes Id-\mathbf{A}_\infty\|_\pi\|D_\nu^{-1}\otimes Id\|_\pi\|y_k-\mathbf{B}_\infty y_k\|_\pi       \nonumber\\
&\leq c_4\|D_\nu^{-1}\|_\pi\|y_k-\mathbf{B}_\infty y_k\|_\nu,                                                 \label{pf2}
\end{align}
where $\bar{y}_k$ is defined in Lemma \ref{l6}, the first inequality has applied the fact that $\nu\otimes \bar{y}_k=\mathbf{B}_\infty y_k$, and the last inequality has employed (\ref{cdc9-3}), $\|D_\nu^{-1}\otimes Id\|_\pi=\|D_\nu^{-1}\|_\pi$ (using the same argument as that in Lemma \ref{l3}), and (\ref{eq2}).

In view of (\ref{eq1}) and (\ref{cdc9-1}), inserting (\ref{pf2}) in (\ref{pf1}) results in
\begin{align}
\|x_{k+1}-\mathbf{A}_\infty x_{k+1}\|_\pi&\leq(1-\alpha)\rho_1\|x_k-\mathbf{A}_\infty x_k\|_\pi         \nonumber\\
&\hspace{0.1cm}+\frac{\alpha c_2c_4\|D_\nu^{-1}\|}{N}\|y_k-\mathbf{B}_\infty y_k\|_\nu.                \label{pf3}
\end{align}

As for $\|x_{k+1}-x_k\|$, it can be obtained from (\ref{cf1}) that
\begin{align}
&\|x_{k+1}-x_k\|          \nonumber\\
&=\|\mathbf{A}x_k-x_k+\alpha\Big(\frac{D_\nu^{-1}\otimes Id}{N}y_k-\mathbf{A}x_k\Big)\|        \nonumber\\
&\leq \|A-I\|\|x_k-\mathbf{A}_\infty x_k\|+\frac{\alpha\|D_\nu^{-1}\|}{N}\|y_k-\mathbf{B}_\infty y_k\|        \nonumber\\
&\hspace{0.4cm}+\alpha\|\frac{D_\nu^{-1}\otimes Id}{N}\mathbf{B}_\infty y_k-\mathbf{A}x_k\|,                 \label{pf4}
\end{align}
where the inequality has leveraged the triangle inequality and the facts that $(\mathbf{A}-I_N\otimes Id)(x_k-\mathbf{A}_\infty x_k)=\mathbf{A}x_k-x_k$ and $\|\mathbf{A}-I_N\otimes Id\|=\|A-I\|$ (using the same argument as that in Lemma \ref{l3}).

Consider the last term in (\ref{pf4}). By using $\mathbf{B}_\infty=\nu{\bf 1}_N^\top\otimes Id$, one has that
\begin{align}
&\|\frac{D_\nu^{-1}\otimes Id}{N}\mathbf{B}_\infty y_k-\mathbf{A}x_k\|=\|{\bf 1}_N\otimes\frac{\bar{y}_k}{N}-\mathbf{A}x_k\|           \nonumber\\
&\leq \|{\bf 1}_N\otimes \Big(\frac{\bar{y}_k}{N}-\frac{\sum_{i=1}^N F_i(\bar{x}_k)}{N}\Big)\|             \nonumber\\
&\hspace{0.4cm}+\|{\bf 1}_N\otimes \Big(\frac{\sum_{i=1}^N F_i(\bar{x}_k)}{N}-x_k^*\Big)\|         \nonumber\\
&\hspace{0.4cm}+\|{\bf 1}_N\otimes x_k^*-\mathbf{A}_\infty x_k\|+\|\mathbf{A}_\infty x_k-\mathbf{A}x_k\|.         \label{pf5}
\end{align}

For the first term in the last inequality in (\ref{pf5}), invoking Lemma \ref{l6}, one can obtain that
\begin{align}
&\|{\bf 1}_N\otimes \Big(\frac{\bar{y}_k}{N}-\frac{\sum_{i=1}^N F_i(\bar{x}_k)}{N}\Big)\|^2         \nonumber\\
&=\|{\bf 1}_N\otimes \frac{\sum_{i=1}^N (F_i(x_{i,k})-F_i(\bar{x}_k))}{N}\|^2                  \nonumber\\
&=\frac{1}{N}\|\sum_{i=1}^N (F_i(x_{i,k})-F_i(\bar{x}_k))\|^2                             \nonumber\\
&\leq \sum_{i=1}^N \|F_i(x_{i,k})-F_i(\bar{x}_k)\|^2                            \nonumber\\
&\leq \sum_{i=1}^N L_i^2\|x_{i,k}-\bar{x}_k\|^2                                    \nonumber\\
&\leq \bar{L}^2\|x_k-\mathbf{A}_\infty x_k\|^2,                                       \label{pf6}
\end{align}
where the first and second inequalities have employed $\|\sum_{i=1}^N z_i\|^2\leq N\sum_{i=1}^N \|z_i\|^2$ for any vectors $z_i$'s and Assumption \ref{a0}, respectively. Similarly, it can be obtained that
\begin{align}
\|{\bf 1}_N\otimes \big(\frac{\sum_{i=1}^N F_i(\bar{x}_k)}{N}-x_k^*\big)\|^2&=N\|F(\bar{x}_k)-x_k^*\|^2     \nonumber\\
&\hspace{-0.8cm}\leq N \|\bar{x}_k-x_k^*\|^2            \nonumber\\
&\hspace{-0.8cm}=\|\mathbf{A}_\infty x_k-{\bf 1}_N\otimes x_k^*\|^2,          \label{cdc0}
\end{align}
where $x_k^*\in Fix(F)$ and the quasi-nonexpansiveness of $F$ have been used in the inequality.

As a result, substituting (\ref{pf6}) and (\ref{cdc0}) into (\ref{pf5}) leads to
\begin{align}
&\|\frac{D_\nu^{-1}\otimes Id}{N}\mathbf{B}_\infty y_k-\mathbf{A}x_k\|          \nonumber\\
&\leq \bar{L}\|x_k-\mathbf{A}_\infty x_k\|+2\|\mathbf{A}_\infty x_k-{\bf 1}_N\otimes x_k^*\|       \nonumber\\
&\hspace{0.4cm}+\|\mathbf{A}x_k-\mathbf{A}_\infty x_k\|              \nonumber\\
&\leq \frac{\rho_1+\bar{L}}{c_1}\|x_k-\mathbf{A}_\infty x_k\|_\pi+2\|\mathbf{A}_\infty x_k-{\bf 1}_N\otimes x_k^*\|,              \label{pf7}
\end{align}
where (\ref{eq1}) and (\ref{cdc5}) have been utilized in the last inequality. Putting (\ref{pf7}) in (\ref{pf4}) leads to
\begin{align}
\|x_{k+1}-x_k\|&\leq \frac{\alpha(\rho_1+\bar{L})+\|A-I\|}{c_1}\|x_k-\mathbf{A}_\infty x_k\|_\pi           \nonumber\\
&\hspace{0.4cm}+\frac{\alpha c_4\|D_\nu^{-1}\|}{Nc_1}\|y_k-\mathbf{B}_\infty y_k\|_\nu        \nonumber\\
&\hspace{0.4cm}+2\alpha\|\mathbf{A}_\infty x_k-{\bf 1}_N\otimes x_k^*\|.                 \label{pf8}
\end{align}

Regarding $\|y_{k+1}-\mathbf{B}_\infty y_{k+1}\|_\nu$, invoking (\ref{cf2}) yields that
\begin{align}
\|y_{k+1}-\mathbf{B}_\infty y_{k+1}\|_\nu&=\|\mathbf{B}y_k-\mathbf{B}_\infty\mathbf{B}y_k+\mathbf{F}(x_{k+1})               \nonumber\\
&\hspace{-0.1cm}-\mathbf{F}(x_k)-\mathbf{B}_\infty(\mathbf{F}(x_{k+1})-\mathbf{F}(x_k))\|_\nu          \nonumber\\
&\hspace{-1.0cm}\leq \|\mathbf{B}y_k-\mathbf{B}_\infty y_k\|_\nu+\|\mathbf{F}(x_{k+1})-\mathbf{F}(x_k)\|_\nu                         \nonumber\\
&\hspace{-0.6cm}+\|\mathbf{B}_\infty(\mathbf{F}(x_{k+1})-\mathbf{F}(x_k))\|_\nu                        \nonumber\\
&\hspace{-1.0cm}\leq \rho_2\|y_k-\mathbf{B}_\infty y_k\|_\nu                                \nonumber\\
&\hspace{-0.6cm}+\frac{c_2(1+\sqrt{N})}{c_3}\|\mathbf{F}(x_{k+1})-\mathbf{F}(x_k)\|,                           \label{pf9}
\end{align}
where $\mathbf{B}_\infty\mathbf{B}=\mathbf{B}_\infty$ has been used in the first inequality, and (\ref{eq1}), (\ref{eq2}), (\ref{cdc9-2}) and $\|\mathbf{B}_\infty\|=\|B_\infty\|\leq \sqrt{\|B_\infty\|_1\|B_\infty\|_\infty}\leq \sqrt{N}$ have been exploited in the last inequality.

On the other hand, it can be obtained that
\begin{align}
\|\mathbf{F}(x_{k+1})-\mathbf{F}(x_k)\|^2&=\sum_{i=1}^N\|F_i(x_{i,k+1})-F_i(x_{i,k})\|^2          \nonumber\\
&\leq \sum_{i=1}^N L_i^2\|x_{i,k+1}-x_{i,k}\|^2                 \nonumber\\
&\leq \bar{L}^2\|x_{k+1}-x_k\|^2,                                   \label{pf10}
\end{align}
where Assumption \ref{a0} has been applied to obtain the first inequality in (\ref{pf10}).

Now substituting (\ref{pf8}) and (\ref{pf10}) into (\ref{pf9}) leads to
\begin{align}
&\|y_{k+1}-\mathbf{B}_\infty y_{k+1}\|_\nu              \nonumber\\
&\leq \rho_2\|y_k-\mathbf{B}_\infty y_k\|_\nu+\frac{c_2\bar{L}(\sqrt{N}+1)}{c_3}\|x_{k+1}-x_k\|                         \nonumber\\
&\leq \Big(\rho_2+\frac{\alpha c_2c_4\bar{L}(\sqrt{N}+1)\|D_\nu^{-1}\|}{Nc_1c_3}\Big)\|y_k-\mathbf{B}_\infty y_k\|_\nu                         \nonumber\\
&\hspace{0.4cm}+\frac{c_2\bar{L}(\sqrt{N}+1)}{c_1c_3}(\alpha\theta_3+\|A-I\|)\|x_k-\mathbf{A}_\infty x_k\|_\pi                          \nonumber\\
&\hspace{0.4cm}+\frac{2\alpha c_2\bar{L}(\sqrt{N}+1)}{c_3}\|\mathbf{A}_\infty x_k-{\bf 1}_N\otimes x_k^*\|,                      \label{pf12}
\end{align}
where $\theta_3=\rho_1+\bar{L}$.

With regard to $\|\mathbf{A}_\infty x_{k+1}-{\bf 1}_N\otimes P_{Fix(F)}(\bar{x}_{k+1})\|$, by defining $\bar{x}_k^*:=P_{Fix(F)}(F_\alpha(\bar{x}_k))$ and noting that $\mathbf{A}_\infty x_{k+1}={\bf 1}_N\otimes \bar{x}_{k+1}$, $x_{k+1}^*=P_{Fix(F)}(\bar{x}_{k+1})$ and $\mathbf{A}_\infty\mathbf{A}=\mathbf{A}_\infty$, invoking (\ref{cf1}) gives rise to
\begin{align}
&\|\mathbf{A}_\infty x_{k+1}-{\bf 1}_N\otimes x_{k+1}^*\|         \nonumber\\
&\leq \|\mathbf{A}_\infty x_{k+1}-{\bf 1}_N\otimes \bar{x}_k^*\|           \nonumber\\
&=\|\mathbf{A}_\infty\mathbf{A}x_k+\alpha\big[\frac{\mathbf{A}_\infty}{N}(D_\nu^{-1}\otimes Id)y_k-\mathbf{A}_\infty\mathbf{A}x_k\big]      \nonumber\\
&\hspace{0.4cm}-{\bf 1}_N\otimes \bar{x}_k^*\|         \nonumber\\
&=\|{\bf 1}_N\otimes \bar{x}_k+\alpha\big[\frac{\mathbf{A}_\infty}{N}({\bf 1}_N\otimes \bar{y}_k)-{\bf 1}_N\otimes \bar{x}_k\big]-{\bf 1}_N\otimes \bar{x}_k^*            \nonumber\\
&\hspace{0.4cm}+\frac{\alpha\mathbf{A}_\infty}{N}(D_\nu^{-1}\otimes Id)(y_k-\mathbf{B}_\infty y_k)\|            \nonumber\\
&\leq \|{\bf 1}_N\otimes \bar{x}_k+\alpha\big[\frac{\mathbf{A}_\infty}{N}({\bf 1}_N\otimes \bar{y}_k)-{\bf 1}_N\otimes \bar{x}_k\big]-{\bf 1}_N\otimes \bar{x}_k^*\|            \nonumber\\
&\hspace{0.4cm}+\frac{\alpha c_4 \|D_\nu^{-1}\|}{\sqrt{N}c_1}\|y_k-\mathbf{B}_\infty y_k\|_\nu,            \label{pf13}
\end{align}
where the last inequality has utilized (\ref{eq1}), (\ref{eq2}), and the fact that $\|\mathbf{A}_\infty\|=\|A_\infty\|\leq \sqrt{\|A_\infty\|_1\|A_\infty\|_\infty}\leq \sqrt{N}$.

On the other hand, by $\mathbf{A}_\infty={\bf 1}_N\otimes \pi^\top$ and Lemma \ref{l6}, one can obtain that
\begin{align}
&\|{\bf 1}_N\otimes \bar{x}_k+\alpha\big[\frac{\mathbf{A}_\infty}{N}({\bf 1}_N\otimes \bar{y}_k)-{\bf 1}_N\otimes \bar{x}_k\big]-{\bf 1}_N\otimes \bar{x}_k^*\|      \nonumber\\
&=\|{\bf 1}_N\otimes \bar{x}_k+\alpha\big[{\bf 1}_N\otimes \frac{\sum_{i=1}^N F_i(x_{i,k})}{N}-{\bf 1}_N\otimes \bar{x}_k\big]       \nonumber\\
&\hspace{0.4cm}-{\bf 1}_N\otimes \bar{x}_k^*\|        \nonumber\\
&=\|{\bf 1}_N\otimes F_\alpha(\bar{x}_k)-{\bf 1}_N\otimes \bar{x}_k^*               \nonumber\\
&\hspace{0.4cm}+\alpha{\bf 1}_N\otimes \frac{\sum_{i=1}^N (F_i(x_{i,k})-F_i(\bar{x}_k))}{N}\|                \nonumber\\
&\leq \|{\bf 1}_N\otimes [F_\alpha(\bar{x}_k)-\bar{x}_k^*]\|              \nonumber\\
&\hspace{0.4cm}+\alpha\|{\bf 1}_N\otimes \frac{\sum_{i=1}^N (F_i(x_{i,k})-F_i(\bar{x}_k))}{N}\|,                   \label{pf14}
\end{align}
where $F_\alpha$ is defined in (\ref{FA}).

For the term $\|{\bf 1}_N\otimes [F_\alpha(\bar{x}_k)-\bar{x}_k^*]\|$ in (\ref{pf14}), invoking $Fix(F_\alpha)=Fix(F)$ and Lemma \ref{l1} implies that
\begin{align}
\|{\bf 1}_N\otimes [F_\alpha(\bar{x}_k)-\hat{x}_k^*]\|^2&=N\|F_\alpha(\bar{x}_k)-\bar{x}_k^*\|^2            \nonumber\\
&\leq N\rho_3^2\|\bar{x}_k-P_{Fix(F)}(\bar{x}_k)\|^2           \nonumber\\
&= N\rho_3^2\|\bar{x}_k-x_k^*\|^2           \nonumber\\
&=\rho_3^2\|{\bf 1}_N\otimes (\bar{x}_k-x_k^*)\|^2.                \label{pf15}
\end{align}

Now, putting together (\ref{pf6}) and (\ref{pf13})-(\ref{pf15}) results in
\begin{align}
&\|\mathbf{A}_\infty x_{k+1}-{\bf 1}_N\otimes x_{k+1}^*\|         \nonumber\\
&\leq \rho_3\|\mathbf{A}_\infty x_k-{\bf 1}_N\otimes x_k^*\|+\frac{\alpha \bar{L}}{c_1}\|x_k-\mathbf{A}_\infty x_k\|_\pi       \nonumber\\
&\hspace{0.4cm}+\frac{\alpha c_4\|D_\nu^{-1}\|}{\sqrt{N}c_1}\|y_k-\mathbf{B}_\infty y_k\|_\nu.            \label{pf16}
\end{align}

In summary, let us define $z_k:=col(\|x_k-\mathbf{A}_\infty x_k\|_\pi,\|y_k-\mathbf{B}_\infty y_k\|_\nu,\|\mathbf{A}_\infty x_k-{\bf 1}_N\otimes x_k^*\|)$. Combining (\ref{pf3}), (\ref{pf12}), and (\ref{pf16}) with $\alpha\in(0,1)$ yields that
\begin{align}
z_{k+1}\leq M(\alpha)z_k,               \label{pf17}
\end{align}
where $M(\alpha)$ is defined in (\ref{cdc14}).

It is easy to see that $z_k$ will converge to the origin at an exponential rate if $\rho(M(\alpha))<1$. To ensure $\rho(M(\alpha))<1$, it is straightforward to observe that when $\alpha=0$, $1$ is a simple eigenvalue of $M(0)$ with corresponding left and right eigenvectors both being $col(0,0,1)$. Then invoking Lemma \ref{l-m} gives rise to
\begin{align}
\frac{d\lambda(\alpha)}{d\alpha}\Big|_{\alpha=0}=-\frac{1}{4\kappa^2}<0,          \nonumber
\end{align}
indicating that the simple eigenvalue $1$ of $M(0)$ will decrease when increasing the value of $\alpha$. Thus, by the continuity of $\rho(M(\alpha))$ with respect to $\alpha$, there must exist a constant $\alpha_c>0$ such that $\rho(M(\alpha))<1$ for all $\alpha\in(0,\alpha_c)$. To find $\alpha_c$, one can see that the graph associated with $M(\alpha)$ consisting of $3$ agents is strongly connected, which, in conjunction with Theorem C.3 in \cite{ren2008distributed}, leads to $M(\alpha)$ being irreducible. Furthermore, in view of Lemma \ref{l4}, it can be obtained that $M(\alpha)$ is primitive, which together with Lemma \ref{l5} results in that $\rho(M(\alpha))$ is a simple eigenvalue of $M(\alpha)$ and all other eigenvalues have absolute values of less than $\rho(M(\alpha))$. Moreover, it can be ensured that $\rho(M(\alpha))=1$ when increasing $\alpha$ from $0$ to some point, and thereby the value of $\alpha_c$ can be calculated by letting $det(I-M(\alpha))=0$. This completes the proof.

\subsection{Proof of Theorem \ref{t3}}

Let us bound $\|x_{k+1}-A_\infty x_{k+1}\|_\pi$ and $d_{Fix(F)}(\tilde{x}_{k+1})$ in the following.

First, to bound $\|x_{k+1}-A_\infty x_{k+1}\|_\pi$, in view of (\ref{P1}) and $A_\infty A=A_\infty$, one has that
\begin{align}
\|x_{k+1}-A_\infty x_{k+1}\|_\pi&=\|Ax_k+\alpha\bar{F}-A_\infty A x_k-\alpha A_\infty\bar{F}\|_\pi          \nonumber\\
&\hspace{-1.1cm}\leq \|Ax_k-A_\infty x_k\|_\pi+\alpha\|\bar{F}-{\bf 1}_N\tilde{F}\|_\pi          \nonumber\\
&\hspace{-1.1cm}\leq \rho_1\|x_k-A_\infty x_k\|_\pi+\alpha c_2\|\bar{F}-{\bf 1}_N\tilde{F}\|,        \label{qq6}
\end{align}
where (\ref{eq1}) and (\ref{P3}) have been utilized in the last inequality.

For the last term in (\ref{qq6}), it is easy to verify that
{\small\begin{align}
\bar{F}-{\bf 1}_N\tilde{F}=\left(
                             \begin{array}{cccc}
                               (\frac{1}{\pi_1}-1)e_{1,k} & -e_{2,k} & \cdots & -e_{N,k} \\
                               -e_{1,k} & (\frac{1}{\pi_2}-1)e_{2,k} & \cdots & -e_{N,k} \\
                               \vdots & \vdots & \ddots & \vdots \\
                               -e_{1,k} & -e_{2,k} & \cdots & (\frac{1}{\pi_N}-1)e_{N,k} \\
                             \end{array}
                           \right)                  \nonumber
\end{align}}
with $e_{i,k}:=\texttt{F}_i(x_k^i)-\hat{x}_{i,k}$, and thus it can be obtained that
\begin{align}
&\|\bar{F}-{\bf 1}_N\tilde{F}\|^2=\sum_{i=1}^N\big[(\frac{1}{\pi_i}-1)^2\|e_{i,k}\|^2+\sum_{j\neq i}\|e_{j,k}\|^2\big]     \nonumber\\
&\hspace{0.4cm}\leq \varpi\sum_{i=1}^N\|e_{i,k}\|^2         \nonumber\\
&\hspace{0.4cm}=\varpi\sum_{i=1}^N\|\texttt{F}_i(x_k^i)-\texttt{F}_i(\tilde{x}_k)+\texttt{F}_i(\tilde{x}_k)-x_i^*+x_i^*-\tilde{x}_{i,k}        \nonumber\\
&\hspace{0.8cm}+\tilde{x}_{i,k}-\hat{x}_{i,k}\|^2          \nonumber\\
&\hspace{0.4cm}\leq 4\varpi\sum_{i=1}^N\big(\|\texttt{F}_i(x_k^i)-\texttt{F}_i(\tilde{x}_k)\|^2+\|\texttt{F}_i(\tilde{x}_k)-x_i^*\|^2        \nonumber\\
&\hspace{0.8cm}+\|\tilde{x}_{i,k}-x_i^*\|^2+\|\hat{x}_{i,k}-\tilde{x}_{i,k}\|^2\big),          \label{qq7}
\end{align}
where $\varpi:=N-1+(1-\underline{\pi})^2/\underline{\pi}^2$, $x^*=(x_1^*,\ldots,x_N^*)$ denotes any fixed point of $F$, $\pi_i\geq\underline{\pi}$ has been used in the first inequality and $\|\sum_{i=1}^n z_i\|^2\leq n\sum_{i=1}^n \|z_i\|^2$ for any vectors $z_i$'s in the last inequality. Note that $\tilde{x}_{i,k}$ is the $i$-th block-coordinate of $\tilde{x}_k$ defined in (\ref{P2}).

To proceed, one can obtain that
\begin{align}
&\sum_{i=1}^N\|\hat{x}_{i,k}-\tilde{x}_{i,k}\|^2=\sum_{i=1}^N\|\sum_{j=1}^N a_{ij}(x_{i,k}^j-\tilde{x}_{i,k})\|^2     \nonumber\\
&\leq \sum_{i=1}^N\sum_{j=1}^N a_{ij}\|x_{i,k}^j-\tilde{x}_{i,k}\|^2             \nonumber\\
&\leq \sum_{i=1}^N\sum_{j=1}^N \|x_{i,k}^j-\tilde{x}_{i,k}\|^2             \nonumber\\
&=\|x_k-{\bf 1}_N\tilde{x}_k\|^2,                                     \label{qq8}
\end{align}
where $\sum_{j=1}^N a_{ij}=1$ has been exploited for the first equality, the convexity of norm $\|\cdot\|^2$ for the first inequality, and $a_{ij}\leq 1$ for the second inequality.

Now, invoking Assumptions \ref{a0} and \ref{a1}.1, (\ref{qq7}) and (\ref{qq8}) yields
\begin{align}
\|\bar{F}-{\bf 1}_N\tilde{F}\|^2&\leq 4\varpi(\bar{L}^2+1)\|x_k-{\bf 1}_N\tilde{x}_k\|^2        \nonumber\\
&\hspace{0.4cm}+8\varpi\|\tilde{x}_{k}-x^*\|^2,          \label{qq9}
\end{align}
which, together with ${\bf 1}_N\tilde{x}_k=A_\infty x_k$, implies
\begin{align}
\|\bar{F}-{\bf 1}_N\tilde{F}\|&\leq 2\sqrt{\varpi(\bar{L}^2+1)}\|x_k-A_\infty x_k\|        \nonumber\\
&\hspace{0.4cm}+2\sqrt{2\varpi}\|\tilde{x}_{k}-x^*\|.          \label{qq10}
\end{align}

At this step, by choosing $x^*=P_{Fix(F)}(\tilde{x}_k)$, substituting (\ref{qq10}) into (\ref{qq6}) leads to
\begin{align}
\|x_{k+1}-A_\infty x_{k+1}\|_\pi&\leq (\rho_1+\alpha\theta_3)\|x_k-A_\infty x_k\|_\pi           \nonumber\\
&\hspace{0.0cm}+2\alpha c_2\sqrt{2\varpi}d_{Fix(F)}(\tilde{x}_k),        \label{qq11}
\end{align}
where $\theta_3:=2c_2\sqrt{N(\bar{L}^2+1)}/c_1$.

Second, to bound $d_{Fix(F)}(\tilde{x}_{k+1})$, one can first observe that
\begin{align}
\tilde{F}&=(e_{1,k},\ldots,e_{N,k})       \nonumber\\
&=(\texttt{F}_1(\tilde{x}_k)-\tilde{x}_{1,k},\ldots,\texttt{F}_N(\tilde{x}_k)-\tilde{x}_{N,k})          \nonumber\\
&\hspace{0.4cm}+h_{1,k}+h_{2,k},                    \label{qq12}
\end{align}
where $e_{i,k}:=\texttt{F}_i(x_k^i)-\hat{x}_{i,k}$ and
\begin{align}
h_{1,k}&:=(\texttt{F}_1(x_k^1)-\texttt{F}_1(\tilde{x}_k),\ldots,\texttt{F}_N(x_k^N)-\texttt{F}_N(\tilde{x}_k)),        \nonumber\\
h_{2,k}&:=(\tilde{x}_{1,k}-\hat{x}_{1,k},\ldots,\tilde{x}_{N,k}-\hat{x}_{N,k}).         \nonumber
\end{align}
Meanwhile, invoking Assumption \ref{a0} yields that
\begin{align}
\|h_{1,k}\|^2&=\sum_{i=1}^N \|\texttt{F}_i(x_k^i)-\texttt{F}_i(\tilde{x}_k)\|^2\leq \sum_{i=1}^N L_i^2\|x_k^i-\tilde{x}_k\|^2                  \nonumber\\
&\leq \bar{L}^2\|x_k-{\bf 1}_N\tilde{x}_k\|^2,                       \label{qq13}
\end{align}
and by (\ref{qq8}) and $\pi_i\in(0,1)$, it has that
\begin{align}
\|h_{2,k}\|^2=\sum_{i=1}^N \|\hat{x}_{i,k}-\tilde{x}_{i,k}\|^2\leq \|x_k-{\bf 1}_N\tilde{x}_k\|^2.                       \label{qq14}
\end{align}

Now, in view of (\ref{P2}), (\ref{qq12}), (\ref{qq13}), (\ref{qq14}) and ${\bf 1}_N\tilde{x}_k=A_\infty x_k$, it has that for $y^*=P_{Fix(F)}(F_\alpha(\tilde{x}_k))\in Fix(F)$,
\begin{align}
&\|\tilde{x}_{k+1}-y^*\|=\|\tilde{x}_k+\alpha\tilde{F}-y^*\|          \nonumber\\
&= \|F_\alpha(\tilde{x}_k)-y^*+\alpha(h_{1,k}+h_{2,k})\|              \nonumber\\
&\leq \|F_\alpha(\tilde{x}_k)-y^*\|+\alpha(\|h_{1,k}\|+\|h_{2,k}\|)       \nonumber\\
&\leq d_{Fix(F)}(F_\alpha(\tilde{x}_k))+\frac{\alpha(\bar{L}+1)}{c_1}\|x_k-A_\infty x_k\|_\pi,         \label{qq15}
\end{align}
where (\ref{eq1}) has been employed in the last inequality and note $F_\alpha:=Id+\alpha(F-Id)$.

To analyze the term $d_{Fix(F)}(F_\alpha(\tilde{x}_k))$ in (\ref{qq15}), invoking Lemma \ref{l1} and (\ref{qq15}) yields that
\begin{align}
&\|\tilde{x}_{k+1}-y^*\|          \nonumber\\
&\leq \rho_3 d_{Fix(F)}(\tilde{x}_k)+\frac{\alpha(\bar{L}+1)}{c_1}\|x_k-A_\infty x_k\|_\pi,         \label{qq15b}
\end{align}

Combining (\ref{qq15}) with $d_{Fix(F)}(\tilde{x}_{k+1})\leq \|\tilde{x}_{k+1}-y^*\|$ can yield that
\begin{align}
d_{Fix(F)}(\tilde{x}_{k+1})&\leq \rho_3 d_{Fix(F)}(\tilde{x}_k)             \nonumber\\
&\hspace{0.4cm}+\frac{\alpha(\bar{L}+1)}{c_1}\|x_k-A_\infty x_k\|_\pi.         \label{qq16}
\end{align}

Finally, by setting $z_k:=col(\|x_k-A_\infty x_k\|_\pi,d_{Fix(F)}(\tilde{x}_k))$ for $k\geq 0$, invoking (\ref{qq11}) and (\ref{qq16}) results in
\begin{align}
z_{k+1}\leq \Theta(\alpha)z_k,          \label{qq17}
\end{align}
where $\Theta(\alpha)$ is defined in (\ref{P8}). Note that $A_\infty x_k={\bf 1}_N\tilde{x}_k$. At this step, Theorem \ref{t3} can be proved by following the similar analysis to that after (\ref{pf17}) for proving Theorem \ref{t1}.


\begin{thebibliography}{10}

\bibitem{bauschke2017convex}
H.~H. Bauschke and P.~L. Combettes, \emph{{Convex Analysis and Monotone
  Operator Theory in Hilbert Spaces}, 2nd ed}.\hskip 1em plus 0.5em minus
  0.4em\relax Springer, New York, 2017.

\bibitem{cegielski2012iterative}
A.~Cegielski, \emph{Iterative Methods for Fixed Point Problems in Hilbert
  Spaces}.\hskip 1em plus 0.5em minus 0.4em\relax Springer, Heidelberg, 2012,
  vol. 2057.

\bibitem{liang2016convergence}
J.~Liang, J.~Fadili, and G.~Peyr{\'e}, ``Convergence rates with inexact
  non-expansive operators,'' \emph{Mathematical Programming}, vol. 159, no.
  1-2, pp. 403--434, 2016.

\bibitem{borwein2017convergence}
J.~M. Borwein, G.~Li, and M.~K. Tam, ``Convergence rate analysis for averaged
  fixed point iterations in common fixed point problems,'' \emph{SIAM Journal
  on Optimization}, vol.~27, no.~1, pp. 1--33, 2017.

\bibitem{bravo2018rates}
M.~Bravo, R.~Cominetti, and M.~Pavez-Sign{\'e}, ``{Rates of convergence for
  inexact Krasnosel'ski\u{\i}-Mann iterations in Banach spaces},''
  \emph{Mathematical Programming}, no. 1-2, pp. 241--262, 2019.

\bibitem{themelis2019supermann}
A.~Themelis and P.~Patrinos, ``{SuperMann: A superlinearly convergent algorithm
  for finding fixed points of nonexpansive operators},'' \emph{IEEE
  Transactions on Automatic Control}, vol.~64, no.~12, pp. 4875--4890, 2019.

\bibitem{mann1953mean}
W.~R. Mann, ``Mean value methods in iteration,'' \emph{Proceedings of the
  American Mathematical Society}, vol.~4, no.~3, pp. 506--510, 1953.

\bibitem{krasnosel1955two}
M.~A. Krasnosel'ski\u{\i}, ``Two comments on the method of successive
  approximations,'' \emph{Uspekhi Matematicheskikh Nauk}, vol.~10, pp.
  123--127, 1955.

\bibitem{reich1979weak}
S.~Reich, ``{Weak convergence theorems for nonexpansive mappings in Banach
  spaces},'' \emph{Journal of Mathematical Analysis and Applications}, vol.~67,
  no.~2, pp. 274--276, 1979.

\bibitem{nedic2009distributed}
A.~Nedi\'{c} and A.~Ozdaglar, ``Distributed subgradient methods for multi-agent
  optimization,'' \emph{IEEE Transactions on Automatic Control}, vol.~54,
  no.~1, pp. 48--61, 2009.

\bibitem{liu2017convergence}
S.~Liu, Z.~Qiu, and L.~Xie, ``Convergence rate analysis of distributed
  optimization with projected subgradient algorithm,'' \emph{Automatica},
  vol.~83, pp. 162--169, 2017.

\bibitem{li2019distributed}
X.~Li, L.~Xie, and Y.~Hong, ``Distributed continuous-time nonsmooth convex
  optimization with coupled inequality constraints,'' \emph{IEEE Transactions
  on Control of Network Systems}, vol.~7, no.~1, pp. 74--84, 2020.

\bibitem{mejia2019solutions}
V.~G.~L. Mejia, F.~L. Lewis, Y.~Wan, E.~N. Sanchez, and L.~Fan, ``{Solutions
  for multi-agent pursuit-evasion games on communication graphs: Finite-time
  capture and asymptotic behaviors},'' \emph{IEEE Transactions on Automatic
  Control}, vol.~65, no.~5, pp. 1911--1923, 2020.

\bibitem{ren2008distributed}
W.~Ren and R.~W. Beard, \emph{{Distributed Consensus in Multi-Vehicle
  Cooperative Control}}.\hskip 1em plus 0.5em minus 0.4em\relax London, U.K.:
  Springer-Verlag, 2008.

\bibitem{li2018quantized}
X.~Li, M.~Z.~Q. Chen, and H.~Su, ``{Quantized consensus of multi-agent networks
  with sampled data and Markovian interaction links},'' \emph{IEEE Transactions
  on Cybernetics}, vol.~49, no.~5, pp. 1816--1825, 2019.

\bibitem{fullmer2018distributed}
D.~Fullmer and A.~S. Morse, ``A distributed algorithm for computing a common
  fixed point of a finite family of paracontractions,'' \emph{IEEE Transactions
  on Automatic Control}, vol.~63, no.~9, pp. 2833--2843, 2018.

\bibitem{fullmer2016asynchronous}
D.~Fullmer, J.~Liu, and A.~S. Morse, ``An asynchronous distributed algorithm
  for computing a common fixed point of a family of paracontractions,'' in
  \emph{Proceedings of 55th Conference on Decision and Control}, Las Vegas,
  USA, 2016, pp. 2620--2625.

\bibitem{liu2017distributed}
J.~Liu, D.~Fullmer, A.~Nedi{\'c}, T.~Ba{\c{s}}ar, and A.~S. Morse, ``A
  distributed algorithm for computing a common fixed point of a family of
  strongly quasi-nonexpansive maps,'' in \emph{Proceedings of American Control
  Conference}, Seattle, USA, 2017, pp. 686--690.

\bibitem{alaviani2019distributed}
S.~S. Alaviani and N.~Elia, ``Distributed multi-agent convex optimization over
  random digraphs,'' \emph{IEEE Transactions on Automatic Control}, vol.~65,
  no.~3, pp. 986--998, 2020.

\bibitem{li2019distributed3}
X.~Li and G.~Feng, ``Distributed algorithms for computing a common fixed point
  of a group of nonexpansive operators,'' \emph{IEEE Transactions on Automatic
  Control}, vol.~66, no.~5, pp. 2130--2145, 2021.

\bibitem{li2019distributed5}
X.~Li and L.~Xie, ``Distributed algorithms for computing a fixed point of
  multi-agent nonexpansive operators,'' \emph{Automatica}, vol. 122, p. 109286,
  2020.

\bibitem{necoara2018randomized}
I.~Necoara, P.~Richt{\'a}rik, and A.~Patrascu, ``{Randomized projection methods
  for convex feasibility problems: Conditioning and convergence rates},''
  \emph{SIAM Journal on Optimization}, vol.~29, no.~4, pp. 2814--2852, 2019.

\bibitem{kruger2018set}
A.~Y. Kruger, D.~R. Luke, and N.~H. Thao, ``Set regularities and feasibility
  problems,'' \emph{Mathematical Programming}, vol. 168, no. 1-2, pp. 279--311,
  2018.

\bibitem{mou2015distributed}
S.~Mou, J.~Liu, and A.~S. Morse, ``A distributed algorithm for solving a linear
  algebraic equation,'' \emph{IEEE Transactions on Automatic Control}, vol.~60,
  no.~11, pp. 2863--2878, 2015.

\bibitem{wang2019distributed}
P.~Wang, W.~Ren, and Z.~Duan, ``Distributed algorithm to solve a system of
  linear equations with unique or multiple solutions from arbitrary
  initializations,'' \emph{IEEE Transactions on Control of Network Systems},
  vol.~6, no.~1, pp. 82--93, 2019.

\bibitem{alaviani2018distributed}
S.~S. Alaviani and N.~Elia, ``A distributed algorithm for solving linear
  algebraic equations over random networks,'' in \emph{Proceedings of 57th
  Conference on Decision and Control}, Miami Beach, FL, USA, 2018, pp. 83--88.

\bibitem{li2020dot}
X.~Li, M.~Meng, and L.~Xie, ``{A linearly convergent algorithm for multi-agent
  quasi-nonexpansive operators in real Hilbert spaces},'' in \emph{Proceedings
  of 59th IEEE Conference on Decision and Control}, Jeju Island, Korea, 2020,
  pp. 4903--4908.

\bibitem{bauschke1996projection}
H.~H. Bauschke and J.~M. Borwein, ``On projection algorithms for solving convex
  feasibility problems,'' \emph{SIAM Review}, vol.~38, no.~3, pp. 367--426,
  1996.

\bibitem{bauschke2015linear}
H.~H. Bauschke, D.~Noll, and H.~M. Phan, ``Linear and strong convergence of
  algorithms involving averaged nonexpansive operators,'' \emph{Journal of
  Mathematical Analysis and Applications}, vol. 421, no.~1, pp. 1--20, 2015.

\bibitem{banjac2018tight}
G.~Banjac and P.~J. Goulart, ``Tight global linear convergence rate bounds for
  operator splitting methods,'' \emph{IEEE Transactions on Automatic Control},
  vol.~63, no.~12, pp. 4126--4139, 2018.

\bibitem{cegielski2018regular}
A.~Cegielski, S.~Reich, and R.~Zalas, ``Regular sequences of quasi-nonexpansive
  operators and their applications,'' \emph{SIAM Journal on Optimization},
  vol.~28, no.~2, pp. 1508--1532, 2018.

\bibitem{debnath2005introduction}
L.~Debnath and P.~Mikusinski, \emph{Introduction to Hilbert Spaces with
  Applications}.\hskip 1em plus 0.5em minus 0.4em\relax Academic Press, 2005.

\bibitem{sutton2018reinforcement}
R.~S. Sutton and A.~G. Barto, \emph{{Reinforcement Learning: An Introduction},
  2nd ed}.\hskip 1em plus 0.5em minus 0.4em\relax MIT press, 2018.

\bibitem{xin2019distributed}
R.~Xin, A.~K. Sahu, U.~A. Khan, and S.~Kar, ``Distributed stochastic
  optimization with gradient tracking over strongly-connected networks,'' in
  \emph{Proceedings of 57th Conference on Decision and Control}, Nice, France,
  2019.

\bibitem{matsushita2017convergence}
S.~Matsushita, ``{On the convergence rate of the Krasnosel'ski\u{\i}-Mann
  iteration},'' \emph{Bulletin of the Australian Mathematical Society},
  vol.~96, no.~1, pp. 162--170, 2017.

\bibitem{zhang2019decentralized}
J.~Zhang and K.~You, ``Decentralized stochastic gradient tracking for empirical
  risk minimization,'' \emph{arXiv preprint arXiv:1909.02712}, 2019.

\bibitem{li2018distributedon}
X.~Li, X.~Yi, and L.~Xie, ``Distributed online optimization for multi-agent
  networks with coupled inequality constraints,'' \emph{IEEE Transactions on
  Automatic Control}, in press, doi: 10.1109/TAC.2020.3021011, 2020.

\bibitem{charalambous2015distributed}
T.~Charalambous, M.~G. Rabbat, M.~Johansson, and C.~N. Hadjicostis,
  ``{Distributed finite-time computation of digraph parameters:
  Left-eigenvector, out-degree and spectrum},'' \emph{IEEE Transactions on
  Control of Network Systems}, vol.~3, no.~2, pp. 137--148, 2015.

\bibitem{horn2012matrix}
R.~A. Horn and C.~R. Johnson, \emph{{Matrix Analysis}, 2nd ed}.\hskip 1em plus
  0.5em minus 0.4em\relax New York, NY: Cambridge University Press, 2012.

\bibitem{li2020distributed}
X.~Li, L.~Xie, and Y.~Hong, ``Distributed aggregative optimization over
  multi-agent networks,'' \emph{arXiv preprint arXiv:2005.13436}, 2020.

\bibitem{jakovetic2014linear}
D.~Jakoveti{\'c}, J.~M.~F. Moura, and J.~Xavier, ``{Linear convergence rate of
  a class of distributed augmented Lagrangian algorithms},'' \emph{IEEE
  Transactions on Automatic Control}, vol.~60, no.~4, pp. 922--936, 2014.

\bibitem{nedic2017achieving}
A.~Nedi{\'c}, A.~Olshevsky, and W.~Shi, ``Achieving geometric convergence for
  distributed optimization over time-varying graphs,'' \emph{SIAM Journal on
  Optimization}, vol.~27, no.~4, pp. 2597--2633, 2017.

\bibitem{xu2018convergence}
J.~Xu, S.~Zhu, Y.~C. Soh, and L.~Xie, ``Convergence of asynchronous distributed
  gradient methods over stochastic networks,'' \emph{IEEE Transactions on
  Automatic Control}, vol.~63, no.~2, pp. 434--448, 2018.

\bibitem{xin2019distributed1}
R.~Xin and U.~A. Khan, ``{Distributed heavy-ball: A generalization and
  acceleration of first-order methods with gradient tracking},'' \emph{IEEE
  Transactions on Automatic Control}, vol.~65, no.~6, pp. 2627--2633, 2020.

\bibitem{pu2020push}
S.~Pu, W.~Shi, J.~Xu, and A.~Nedi{\'c}, ``Push-pull gradient methods for
  distributed optimization in networks,'' \emph{IEEE Transactions on Automatic
  Control}, in press, doi: 10.1109/TAC.2020.2972824, 2020.

\bibitem{liang2019exponential}
S.~Liang, L.~Y. Wang, and G.~Yin, ``Exponential convergence of distributed
  primal-dual convex optimization algorithm without strong convexity,''
  \emph{Automatica}, vol. 105, pp. 298--306, 2019.

\bibitem{tatarenko2019geometric}
T.~Tatarenko and A.~Nedi{\'c}, ``Geometric convergence of distributed gradient
  play in games with unconstrained action sets,'' \emph{arXiv preprint
  arXiv:1907.07144}, 2019.

\bibitem{bianchi2019distributed}
M.~Bianchi, G.~Belgioioso, and S.~Grammatico, ``{A distributed proximal-point
  algorithm for Nash equilibrium seeking under partial-decision information
  with geometric convergence},'' \emph{arXiv preprint arXiv:1910.11613}, 2019.

\bibitem{bianchi2020fully}
M.~Bianchi and S.~Grammatico, ``{Fully distributed Nash equilibrium seeking
  over time-varying communication networks with linear convergence rate},''
  \emph{arXiv preprint arXiv:2003.10871}, 2020.

\bibitem{meng2020linear}
M.~Meng and X.~Li, ``{On the linear convergence of distributed Nash equilibrium
  seeking for multi-cluster games under partial-decision information},''
  \emph{arXiv preprint arXiv:2005.06923}, 2020.

\bibitem{wang2014iteration}
P.-W. Wang and C.-J. Lin, ``Iteration complexity of feasible descent methods
  for convex optimization,'' \emph{The Journal of Machine Learning Research},
  vol.~15, no.~1, pp. 1523--1548, 2014.

\end{thebibliography}
\end{document}